\numberwithin{equation}{section}
\numberwithin{figure}{section}
\theoremstyle{plain}
\newtheorem{thm}{\protect\theoremname}[section]
\theoremstyle{remark}
\newtheorem{rem}[thm]{\protect\remarkname}
\theoremstyle{plain}
\newtheorem{cor}[thm]{\protect\corollaryname}
\theoremstyle{plain}
\newtheorem{prop}[thm]{\protect\propositionname}
\theoremstyle{definition}
\newtheorem{defn}[thm]{\protect\definitionname}
\theoremstyle{definition}
\newtheorem{example}[thm]{\protect\examplename}
\theoremstyle{plain}
\newtheorem{lem}[thm]{\protect\lemmaname}
\pgfplotsset{compat=1.15}
\renewcommand{\d}{\mathrm{d}}
\renewcommand{\rho}{\varrho}
\renewcommand{\epsilon}{\varepsilon}
\newcommand{\er}{\mathfrak{e}}
\DeclareMathOperator{\supp}{supp}
\DeclareMathOperator{\card}{card}
\renewcommand{\tilde}{\widetilde}
\renewcommand{\emptyset}{\varnothing}
\renewcommand{\phi}{\varphi}
\def\N{\mathbb N}
\def\d{\;\mathrm d}
\def\R{\mathbb{R}}
\def\Z{\mathbb{Z}}
\def\Q{\mathcal{Q}}
\def\J{\mathfrak{J}}
\def\GL{\tau}
\providecommand{\corollaryname}{Corollary}
\providecommand{\definitionname}{Definition}
\providecommand{\examplename}{Example}
\providecommand{\lemmaname}{Lemma}
\providecommand{\propositionname}{Proposition}
\providecommand{\remarkname}{Remark}
\providecommand{\theoremname}{Theorem}
\begin{document}
\title[Quantization dimensions via Rényi dimensions]{Quantization dimensions of compactly supported probability measures
via Rényi dimensions}
\author{Marc Kesseböhmer}
\email{mhk@uni-bremen.de}
\author{Aljoscha Niemann}
\email{niemann1@uni-bremen.de}
\address{FB 3 – Mathematik und Informatik, University of Bremen, Bibliothekstr.
1, 28359 Bremen, Germany}
\author{Sanguo Zhu}
\email{sgzhu@jsut.edu.cn}
\address{School of Mathematics and Physics, Jiangsu University of Technology,
Changzhou 213001, China}
\begin{abstract}
We provide a complete picture of the upper quantization dimension 
 in terms of the Rényi dimension by proving that the upper quantization
dimension $\overline{D}_{r}(\nu)$ of order $r>0$ for an arbitrary compactly supported Borel
probability measure $\nu$ is given by its Rényi dimension at the
point $q_{r}$ where the $L^{q}$-spectrum of $\nu$ and the line
through the origin with slope $r$ intersect. In particular, this
proves the continuity of $r\mapsto\overline{D}_{r}(\text{\ensuremath{\nu)}}$
as conjectured by Lindsay (2001). This viewpoint also sheds new light
on the connection of the quantization problem with other concepts
from fractal geometry in that we obtain a one-to-one correspondence
of the upper quantization dimension and the $L^{q}$-spectrum restricted
to $\left(0,1\right)$. We give sufficient conditions in terms of
the $L^{q}$-spectrum for the existence of the quantization dimension.
In this way we show as a byproduct that the quantization dimension
exists for every Gibbs measure with respect to a $\mathcal{C}^{1}$-self-conformal
iterated function system on $\R^{d}$ without any assumption on the
separation conditions as well as for inhomogeneous self-similar measures
under the inhomogeneous open sets condition. Some known general bounds
on the quantization dimension in terms of other fractal dimensions
can readily be derived from our new approach, some can be improved.
\end{abstract}

\keywords{quantization dimension, Rényi dimension, $L^{q}$-spectrum, Minkowski
dimension, partition function, coarse multifractal formalism.}
\subjclass[2000]{28A80; 42B35; 45D05}
\maketitle

\section{Introduction and statement of main results\label{sec:Introduction-and-background}}

The quantization problem for probability measures originates from
information theory, e.\,g\@. image compression and data processing,
and was subsequently studied in great detail by many mathematicians.
Recently, this theory has again attracted increasing attention in
applications such as optimal transport problems \cite{MR4359783},
numerical integration \cite{pages2015introduction,MR4337711} and
mathematical finance \cite{MR2738823,MR3287772,MR3451833,MR3455415,MR4029233}.
The core problem is to study the asymptotics of the errors in approximating
a given random variable with a quantized version of the random variable
(i.\,e\@. taking only finitely many values), in the sense of $r$-means.
The quantization dimension then reflects the exponential rate of this
convergence and it has been studied by various authors, for example
\cite{MR2182671,MR1877974,MR2158264}, \cite{MR3035159,MR3300877,MR3345508,MR3558153,MR3415729,MR3489058,MR3632842,MR3636381,MR3731698,MR4119544,MR4206065}.
To be more specific, let $X$ be a bounded $\R^{d}$-valued random
variable, $d\in\N$, on a probability space $\left(\Omega,\mathcal{A},\mathbb{P}\right)$
with distribution $\nu\coloneqq\mathbb{P}\circ X^{-1}$. For a given
$n\in\N$, let $\mathcal{F}_{n}$ denote the set of all Borel measurable
functions $f:\R^{d}\rightarrow\R^{d}$ with $\card\left(f\left(\R^{d}\right)\right)\leq n$;
we call an element of $\mathcal{F}_{n}$ an $n$-quantizer. Our aim
is to approximate $X$ with a quantized version of $X$, i.\,e\@.
$X$ will be approximated by $f\circ X$ with $f\in\mathcal{F}_{n}$
where we quantify this approximation with respect to the $r$-quasinorm.
More precisely, we are interested in the\emph{ $n$-th quantization
error} of $\nu$\emph{ of order $r>0$} given by
\[
\er_{n,r}(\nu)\coloneqq\inf_{f\in\mathcal{F}_{n}}\left(\int_{\Omega}\left|X-f(X)\right|^{r}\d\mathbb{P}\right)^{1/r}=\inf_{f\in\mathcal{F}_{n}}\left(\int\left|x-f(x)\right|^{r}\d\nu(x)\right)^{1/r}.
\]
As pointed out for instance in \cite{MR1877974}, `\emph{the problem
of determining the quantization dimension function for a general probability
is open.}` With the present paper we want to close this gap completely
for the upper quantization dimension and, under additional regularity
conditions, also for the lower quantization dimension. In fact, we
identify the upper quantization dimension of $\nu$ of order $r>0$
as the Rényi dimension of $\nu$ evaluated at the point $q_{r}$ where
the $L^{q}$-spectrum of $\nu$ and the line through the origin with
slope $r$ intersect. Building on a result of \parencite{MR1838304,MR2322179},
we prove the existence of the quantization dimension of Gibbs measures
with respect to conformal iterated function systems without any separation
conditions and, by a result of \cite{MR3201323}, we prove the same
statement for inhomogeneous self-similar measures under the iOSC (see
below for definitions).

The starting point of our investigations was the observation that
for particular measures the quantization dimension $D_{1}\left(\nu\right)$
of order $r=1$ is connected to the upper spectral dimension $\overline{s}_{\nu}$
of the Kre\u{\i}n–Feller operator associated to $\nu$ for $d=1$
as determined in \cite{KN2022} via the identity $\overline{D}_{1}\left(\nu\right)=\overline{s}_{\nu}/\left(1-\overline{s}_{\nu}\right)$.
Indeed, for $r\neq1$, we also expect similar connections to higher
dimensional polyharmonic operators as considered in \cite{KN21b}.

In the following we assume that $\nu$ is a compactly supported Borel
probability measure and let $\mathcal{D}_{n}$ denote the partition
of $\R^{d}$ by half-open cubes of the form $\prod_{i=1}^{d}\left(k_{i}2^{-n},\left(k_{i}+1\right)2^{-n}\right]$
with $k\in\Z^{d}$. We set $\mathcal{D}\coloneqq\bigcup_{n\in\N}\mathcal{D}_{n}$,
which defines a semiring of sets. For every $n\in\mathbb{N}$, we
write $\mathcal{A}_{n}:=\left\{ \alpha\subset\mathbb{R}^{d}:1\leq{\rm card}(\alpha)\leq n\right\} $.
Due to \cite[Lemma 3.1]{MR1764176} an equivalent formulation of the
$n$-th quantization error of $\nu$ of order $r$ is given by 
\begin{equation}
\er_{n,r}(\nu)=\left\{ \begin{array}{ll}
\inf_{\alpha\in\mathcal{A}_{n}}\big(\int d(x,\alpha)^{r}\d\nu(x)\big)^{1/r}, & r>0,\\
\inf_{\alpha\in\mathcal{A}_{n}}\exp\int\log d(x,\alpha)\d\nu(x), & r=0,
\end{array}\right.\label{eq:quanerrordef}
\end{equation}
with $d(x,\alpha)\coloneqq\min_{y\in\alpha}\left\Vert x-y\right\Vert $.
By \cite[Lemma 6.1]{MR1764176}, we have $\er_{n,r}(\nu)\rightarrow0$.
In fact, it is well known that $\er_{n,r}(\nu)=O(n^{-1/d})$ and,
if $\nu$ is singular with respect to Lebesgue, then $\er_{n,r}(\nu)=o\left(n^{-1/d}\right)$
(see \cite[Theorem 6.2]{MR1764176}). It is a natural question to
find the `right exponent' for the convergence order. This will be
the main concern of this paper. For this we define the \emph{upper
and lower quantization dimension for $\nu$ of order $r$} by 
\[
\overline{D}_{r}(\nu):=\limsup_{n\to\infty}\frac{\log n}{-\log\er_{n,r}(\nu)},\;\;\underline{D}_{r}(\nu):=\liminf_{n\to\infty}\frac{\log n}{-\log\er_{n,r}(\nu)}.
\]
If $\overline{D}_{r}(\nu)=\underline{D}_{r}(\nu)$, we call the common
value the \emph{quantization dimension for $\nu$ of order $r$} and
denote it by $D_{r}(\nu)$. Note that without loss of the generality,
we can (and for ease of exposition, we will) assume that the support
of $\text{\ensuremath{\nu}}$ is contained in $\Q\coloneqq(0,1]^{d}$.
To see this fix $a\in\R\setminus\left\{ 0\right\} $, $b\in\R^{d}$
and let $\Phi_{a,b}(x)\coloneqq ax+b$, $x\in\R^{d}$ such that $\Phi_{a,b}(\supp(\nu))\subset(0,1]^{d}$.
Then, 
\begin{align*}
\er_{n,r}\left(\nu\circ\Phi_{a,b}^{-1}\right) & =\inf_{f\in\mathcal{F}_{n}}\left(\int\left|\Phi_{a,b}(x)-f(\Phi_{a,b}(x))\right|^{r}\d\nu\right)^{1/r}\\
 & =\inf_{f\in\mathcal{F}_{n}}\left(\int\left|ax+b-f(\Phi_{a,b}(x))\right|^{r}\d\nu\right)^{1/r}\\
 & =|a|\inf_{f\in\mathcal{F}_{n}}\left(\int\left|x-\Phi_{1/a,-b/a}\left(f(\Phi_{a,b}(x))\right)\right|^{r}\d\nu\right)^{1/r}=|a|\er_{n,r}\left(\nu\right),
\end{align*}
where we have used the fact that $f\mapsto\Phi_{1/a,-b/a}\circ f\circ\Phi_{a,b}$
defines a surjection on $\mathcal{F}_{n}$. The following classical
result in quantization theory goes back to Zador \cite{MR651809}
and was generalized by Bucklew and Wise in \cite{MR651819}; we refer
to \cite[Theorem 6.2]{MR1764176} for a rigorous proof:
\begin{quote}
\emph{Let $\nu$ be a Borel probability measure with bounded support
and non-vanishing absolutely continuous part with density $h$. Then,
for }$r>0$, \emph{
\[
\lim_{n\rightarrow\infty}n^{-r/d}\er_{n,r}(\nu)^{r}=C(r,d)\bigg(\int h^{\frac{d}{d+r}}(x)\d x\bigg)^{\frac{d+r}{d}},
\]
where $C(r,d)$ is a constant independent of $\nu$.}
\end{quote}
Interestingly, there is an analog results on the eigenvalue counting
function for polyharmonic operators (see for instance \cite{MR0278126}).
While engineers are mainly dealing with absolutely continuous distributions,
the quantization problem is significant for all Borel probability
measures with bounded support.

In this note we follow some ideas that are developed in \cite{KN2022,KN2022b,KN21,KN21b}
to tackle spectral asymptotics for Kre\u{\i}n–Feller operators. One
of the central objects is the $L^{q}$-spectrum $\beta_{\nu}$ of
$\nu$ given, for $q\in\R_{\geq0}$, by
\[
\beta_{\nu}(q)\coloneqq\limsup_{n\rightarrow\infty}\beta_{\nu,n}(q)\;\;\;\:\text{ with }\:\;\;\;\beta_{\nu,n}(q)\coloneqq\frac{\log\left(\sum_{C\in\mathcal{D}_{n}}\nu(C)^{q}\right)}{\log(2^{n})}.
\]
 Define
\[
q_{r}\coloneqq\inf\left\{ q>0:\beta_{\nu}(q)<rq\right\} .
\]
Further, the \emph{(upper) generalized Rényi dimension of $\nu$ }\cite{zbMATH03124309}
is given, for $q\in\R_{\geq0}$, by
\[
\overline{\mathfrak{R}}_{\nu}\left(q\right)\coloneqq\begin{cases}
\beta_{\nu}(q)/\left(1-q\right), & \text{for }q\neq1,\\
\limsup_{n}\left(\sum_{C\in\mathcal{D}_{n}}\nu(C)\log\nu(C)\right)/\log\left(2^{-n}\right), & \text{for }q=1.
\end{cases}
\]
It turns out that, for $q\neq1$ (and similar for $q=1$), the Rényi
dimension can be expressed also in terms of the Hentschel–Procaccia
generalized dimension \cite{MR719636}
\[
\overline{\mathfrak{R}}_{\nu}\left(q\right)=\frac{1}{1-q}\limsup_{r\to0}\frac{\log\int\nu\left(B_{r}\left(x\right)\right)^{q-1}\d\nu\left(x\right)}{-\log r}.
\]
It is easy to construct purely atomic measures such that $q_{r}=0$
for all $r>0$ and $\beta_{\nu}(0)>0$, see \cite{KN2022}. In this
case, the upper quantization dimension is also $0$.
Further, we will need some ideas from entropy theory: Let us define
the set of $\nu$-\emph{partitions} $\Pi_{\nu}$ to be the set of
finite collections $P$ of dyadic cubes such that there exists a partition
$\tilde{P}$ of $\Q$ by dyadic cubes from $\mathcal{D}$ with $P=\left\{ Q\in\tilde{P}:\nu(Q)>0\right\} $.
We define
\begin{equation}
\mathcal{M}_{\nu,r}\left(x\right)\coloneqq\inf\left\{ \card\left(P\right):P\in\Pi_{\nu},\,\max_{Q\in P}\nu\left(Q\right)\Lambda\left(Q\right)^{r/d}<1/x\right\} .\label{eq:M-nu(r)}
\end{equation}
and
\[
\overline{h}_{\nu,r}\coloneqq\limsup_{x\to\infty}\frac{\log\mathcal{M}_{\nu,r}\left(x\right)}{\log x},\quad\underline{h}_{\nu,r}\coloneqq\liminf_{x\to\infty}\frac{\log\mathcal{M}_{\nu,r}\left(x\right)}{\log x}
\]
will be called the\emph{ upper, }resp.\emph{ lower, $\left(\nu,r\right)$-partition
entropy.} We write $\overline{h}_{r}\coloneqq\overline{h}_{\nu,r}$
and $\underline{h}_{r}\coloneqq\underline{h}_{\nu,r}$ .

For all $n\in\N$ and $\alpha>0$, we define 
\[
\mathcal{N}_{\nu,\alpha,r}\left(n\right)\coloneqq\card N_{\nu,\alpha,r}\left(n\right),\quad N_{\nu,\alpha,r}\left(n\right)\coloneqq\left\{ Q\in\mathcal{D}_{n}:\nu\left(Q\right)\Lambda\left(Q\right)^{r/d}\geq2^{-\alpha n}\right\} ,
\]
and set
\[
\overline{F}_{\nu,r}\left(\alpha\right)\coloneqq\limsup_{n}\frac{\log^{+}\left(\mathcal{N}_{\nu,\alpha,r}\left(n\right)\right)}{\log2^{n}}\text{ and }\underline{F}_{{\nu,r}}\left(\alpha\right)\coloneqq\liminf_{n}\frac{\log^{+}\left(\mathcal{N}_{\nu,\alpha,r}\left(n\right)\right)}{\log2^{n}}.
\]
Following \cite{KN2022b}, we refer to the quantities
\[
\overline{F}_{r}\coloneqq\overline{F}_{\nu,r}\coloneqq\sup_{\alpha>0}\frac{\overline{F}_{\nu,r}\left(\alpha\right)}{\alpha}\text{ and }\underline{F}_{r}\coloneqq\underline{F}_{\nu,r}\coloneqq\sup_{\alpha>0}\frac{\underline{F}_{\nu,r}\left(\alpha\right)}{\alpha}
\]
as the $\left(\nu,r\right)$\emph{-upper}, resp. \emph{lower, optimized
coarse multifractal dimension}. We will see in \prettyref{sec:OptimalPartitions}
that we always have 
\begin{equation}
\overline{F}_{r}=q_{r}=\overline{h}_{r}\geq\underline{h}_{r}\geq\underline{F}_{r}.\label{eq:identities}
\end{equation}
Now, we are in the position to state our main results. All proofs
of this section will be postponed to \prettyref{subsec:Proofs-of-main}.
\begin{thm}
\label{thm:Main}Let $\nu$ be compactly supported probability measure
on $\R^{d}$. If $\sup_{x\in(0,1)}\beta_{\nu}(x)>0$, then for every
$r>0$, 
\[
\frac{r\underline{F}_{r}}{1-\underline{F}_{r}}\leq\underline{D}_{r}\left(\nu\right)\leq\frac{r\underline{h}_{r}}{1-\underline{h}_{r}}\leq\overline{D}_{r}\left(\nu\right)=\overline{\mathfrak{R}}_{\nu}\left(q_{r}\right).
\]
Otherwise, if $\beta_{\nu}(x)=0$ for all $x>0$, then $D_{r}\left(\nu\right)=0$
for all $r\geq0$.
\end{thm}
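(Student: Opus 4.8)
The plan is to squeeze the quantization errors between the partition entropy counting function $\mathcal{M}_{\nu,r}$, which governs the upper bounds, and the coarse multifractal counting function $\mathcal{N}_{\nu,\alpha,r}$, which governs the lower bounds, and then to collapse the resulting inequalities using \eqref{eq:identities}. Since $t\mapsto rt/(1-t)$ is continuous and strictly increasing on $[0,1)$, it is enough to establish
\[
\frac{r\overline{F}_{r}}{1-\overline{F}_{r}}\leq\overline{D}_{r}(\nu),\qquad\frac{r\underline{F}_{r}}{1-\underline{F}_{r}}\leq\underline{D}_{r}(\nu),\qquad\overline{D}_{r}(\nu)\leq\frac{r\overline{h}_{r}}{1-\overline{h}_{r}},\qquad\underline{D}_{r}(\nu)\leq\frac{r\underline{h}_{r}}{1-\underline{h}_{r}};
\]
indeed \eqref{eq:identities} then turns the first and third estimate into the single equality $\overline{D}_{r}(\nu)=r\overline{h}_{r}/(1-\overline{h}_{r})$, while $\underline{F}_{r}\leq\underline{h}_{r}\leq\overline{h}_{r}$ and monotonicity supply the remaining links. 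To rewrite $r\overline{h}_{r}/(1-\overline{h}_{r})$ as $\overline{\mathfrak{R}}_{\nu}(q_{r})$ I would note that each $\beta_{\nu,n}$ is convex and non-increasing with $\beta_{\nu,n}(1)=0$, that a pointwise $\limsup$ of convex functions is convex, and that $\beta_{\nu}$ is finite on $[0,\infty)$; hence $\beta_{\nu}$ is continuous on $(0,\infty)$, and the standing hypothesis $\sup_{x\in(0,1)}\beta_{\nu}(x)=\beta_{\nu}(0^{+})>0$ together with $\beta_{\nu}(1)=0<r$ forces $q_{r}\in(0,1)$ with $\beta_{\nu}(q_{r})=rq_{r}$ at the crossing point, so $\overline{\mathfrak{R}}_{\nu}(q_{r})=\beta_{\nu}(q_{r})/(1-q_{r})=rq_{r}/(1-q_{r})=r\overline{h}_{r}/(1-\overline{h}_{r})$ by \eqref{eq:identities}.

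For the upper estimates, given $n$ put $x_{n}\coloneqq\sup\{x>0:\mathcal{M}_{\nu,r}(x)\leq n\}$ and for each $x<x_{n}$ pick $P\in\Pi_{\nu}$ with $\card(P)\leq n$ and $\max_{Q\in P}\nu(Q)\Lambda(Q)^{r/d}<1/x$. Choosing one point in each $Q\in P$ gives an $n$-quantizer $\gamma$ with $d(x,\gamma)\leq\diam(Q)=\sqrt{d}\,\Lambda(Q)^{1/d}$ on $Q$, so
\[
\er_{n,r}(\nu)^{r}\leq\sum_{Q\in P}\nu(Q)\diam(Q)^{r}=d^{r/2}\sum_{Q\in P}\nu(Q)\Lambda(Q)^{r/d}\leq d^{r/2}\,\card(P)\max_{Q\in P}\nu(Q)\Lambda(Q)^{r/d}<\frac{d^{r/2}n}{x},
\]
and letting $x\uparrow x_{n}$ gives $\er_{n,r}(\nu)^{r}\leq d^{r/2}n/x_{n}$. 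Splitting any cube that violates the defining inequality of $\mathcal{M}_{\nu,r}$ a bounded number $k=k(r)$ of times (each split multiplies $\nu(Q)\Lambda(Q)^{r/d}$ by at most $2^{-r}$ and $\card$ by $2^{d}$) shows $\mathcal{M}_{\nu,r}(2x)\leq 2^{dk}\mathcal{M}_{\nu,r}(x)$, so $\log\mathcal{M}_{\nu,r}$ has bounded increments along dyadic scales; discretising $x_{n}$ to the nearest power of $2$ then yields $\limsup_{n}\log n/\log x_{n}=\overline{h}_{r}$ and $\liminf_{n}\log n/\log x_{n}=\underline{h}_{r}$, and inserting $\er_{n,r}(\nu)^{r}\leq d^{r/2}n/x_{n}$ into the definitions of $\overline{D}_{r}$, $\underline{D}_{r}$ (via $-\log\er_{n,r}(\nu)=-\tfrac{1}{r}\log\er_{n,r}(\nu)^{r}$) gives the two upper estimates.

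For the lower estimates, fix $\alpha\geq r$ and a level $m$, and call $Q\in\mathcal{D}_{m}$ \emph{isolated} from $\gamma\in\mathcal{A}_{n}$ if neither $Q$ nor any of its $3^{d}-1$ neighbours meets $\gamma$; then $d(x,\gamma)\geq 2^{-m}$ for $x\in Q$, and at most $3^{d}n$ cubes of $\mathcal{D}_{m}$ fail to be isolated. Every $Q\in N_{\nu,\alpha,r}(m)$ has $\nu(Q)\geq 2^{-m(\alpha-r)}$ (whence also $\mathcal{N}_{\nu,\alpha,r}(m)\leq 2^{m(\alpha-r)}$, so the denominators below are positive), so if $\mathcal{N}_{\nu,\alpha,r}(m)\geq 2\cdot 3^{d}n$ then at least half of the heavy cubes are isolated and
\[
\er_{n,r}(\nu)^{r}\geq 2^{-mr}\sum_{Q\text{ isolated, heavy}}\nu(Q)\geq\tfrac{1}{2}\,2^{-m\alpha}\mathcal{N}_{\nu,\alpha,r}(m).
\]
Taking $n=\lfloor\mathcal{N}_{\nu,\alpha,r}(m)/(2\cdot 3^{d})\rfloor$ along a sequence of levels realising $\overline{F}_{\nu,r}(\alpha)$ — and, for the liminf version, choosing for every large $n$ the least $m$ with $\mathcal{N}_{\nu,\alpha,r}(m)\geq 2\cdot 3^{d}n$, which exists since $\mathcal{N}_{\nu,\alpha,r}(m)\geq 2^{m(\underline{F}_{\nu,r}(\alpha)-\epsilon)}$ eventually — the exponent bookkeeping yields $\overline{D}_{r}(\nu)\geq r\overline{F}_{\nu,r}(\alpha)/(\alpha-\overline{F}_{\nu,r}(\alpha))$ and $\underline{D}_{r}(\nu)\geq r\underline{F}_{\nu,r}(\alpha)/(\alpha-\underline{F}_{\nu,r}(\alpha))$; taking $\sup_{\alpha}$ and using monotonicity of $t\mapsto rt/(1-t)$ completes the lower estimates. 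This step is the main obstacle: the isolated-cube count has to be made quantitative, the scale $m$ has to be chosen optimally — which is immediate for the $\limsup$ but requires handling \emph{every} large $n$ for the $\liminf$, where $m\mapsto\mathcal{N}_{\nu,\alpha,r}(m)$ need not be monotone — and the bookkeeping has to be uniform in the threshold $\alpha$ before passing to the supremum.

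Finally, if $\beta_{\nu}(x)=0$ for all $x>0$ then $q_{r}=0$, so $\overline{h}_{r}=\overline{F}_{r}=0$ by \eqref{eq:identities} and the outer estimates force $\overline{D}_{r}(\nu)=0$; since $0\leq\underline{D}_{r}(\nu)\leq\overline{D}_{r}(\nu)$ this gives $D_{r}(\nu)=0$ for every $r>0$, and for $r=0$ Jensen's inequality gives $\exp\int\log d(x,\alpha)\d\nu\leq\big(\int d(x,\alpha)^{r}\d\nu\big)^{1/r}$ for each $\alpha$ and $r>0$, hence $\er_{n,0}(\nu)\leq\er_{n,r}(\nu)$ and $\overline{D}_{0}(\nu)\leq\overline{D}_{r}(\nu)=0$, so $D_{0}(\nu)=0$ as well.
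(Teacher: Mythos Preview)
Your argument is correct and follows essentially the same strategy as the paper, which proves the theorem by combining Proposition~\ref{prop:UpperBound} (upper bounds via optimal partitions and the dual quantity $\gamma_{\J_{\nu,r},n}$, which is the inverse of your $x_n$) with Proposition~\ref{prop:QDLowerBound} (lower bounds via separated heavy cubes), and then invokes \eqref{eq:identities}. The only cosmetic differences are that the paper extracts a $5^{d}$-separated subfamily of $N_{\nu,\alpha,r}(n)$ via the greedy Lemma~\ref{lem:GeneralPrincipleLowerBound-1} in place of your direct $3^{d}$-neighbour count, and imports the duality $-1/\overline{h}_{\J_{\nu,r}}=\overline{\alpha}_{\J_{\nu,r}}$ from Proposition~\ref{prop:UpperBoundJ_a} rather than re-deriving it through your doubling estimate $\mathcal{M}_{\nu,r}(2x)\leq 2^{dk}\mathcal{M}_{\nu,r}(x)$.
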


\begin{rem}
We would like to emphasize here that the proof of the upper bound
of $\overline{D}_{r}\left(\nu\right)$ is based on an \emph{adaptive
approximation algorithm} in the sense of \cite{MR939183,MR1781213}.
This can be used to implement a straightforward procedure for finding
a quantizer $f\in\mathcal{F}_{n},$ $n\in\N$ that provides an upper
bound, which is optimal with respect to the upper exponential rate.
\end{rem}

\begin{rem}
~
\begin{enumerate}
\item The following useful identities are a consequence of \prettyref{eq:identities}
and apply independently of the condition on $\beta_{\nu}$:
\[
\overline{D}_{r}\left(\nu\right)=\frac{r\overline{h}_{r}}{1-\overline{h}_{r}}=\frac{rq_{r}}{1-q_{r}}=\frac{r\overline{F}_{r}}{1-\overline{F}_{r}}.
\]
We will make use of this observation in the proof of \prettyref{thm:Main}.
\item At least for special cases, it has been observed in \cite[p. 6]{MR3558153}
that the upper quantization dimension is often determined by a critical
value; we are now in the position to determine this critical value
for arbitrary compactly supported probability measures as 
\[
\overline{D}_{r}\left(\nu\right)=\inf\left\{ q>0:\sum_{Q\in\mathcal{D}}\left(\nu(Q)\Lambda(Q)^{r/d}\right)^{q/\left(q+r\right)}<\infty\right\} .
\]
This follows from the definition of $q_{r}$ and the identities given
in (1) (see also \cite[Lemma 3.3]{KN2022b}).
\item For $r=0$ a general result of \cite{MR2954531} is applicable yielding
\[
\dim_{H}\left(\nu\right)\leq\underline{D}_{0}\left(\nu\right)\leq\overline{D}_{0}\left(\nu\right)\leq\dim_{P}^{*}\left(\nu\right)\leq-\partial^{-}\beta(1),
\]
where $-\partial^{-}\beta(1)$ denotes the left-sided derivative of
$\beta$ in $1$, 
\begin{align*}
\dim^{*}\left(\nu\right) & \coloneqq\inf\left\{ \dim\left(A\right):\mu\left(A\right)=\mu\left(\R^{d}\right)\right\} ,\\
\dim\left(\nu\right) & \coloneqq\inf\left\{ \dim\left(A\right):\mu\left(A\right)>0\right\} 
\end{align*}
with $\dim$ denoting either the Hausdorff dimension $\dim_{H}$ or
the packing dimension $\dim_{P}$.
\item By \cite{MR1866392}, for $r=\infty$ and defining the quantization
dimension $\overline{D}_{\infty}(\nu)$ via the quantization error
of $\nu$ of order $\infty$ in terms of $L_{\infty}\left(\nu\right)$
in the place of $L_{r}\left(\nu\right)$ in \prettyref{eq:quanerrordef},
we have
\[
\lim_{r\uparrow\infty}\overline{D}_{r}\left(\nu\right)=\lim_{x\downarrow0}\beta_{\nu}(x)\leq\overline{D}_{\infty}(\nu)=\overline{\dim}_{M}\left(\nu\right)\coloneqq\overline{\dim}_{M}(\supp(\nu))
\]
and 
\begin{equation}
\underline{D}_{r}\left(\nu\right)\leq\underline{\dim}_{M}\left(\nu\right)\coloneqq\underline{\dim}_{M}(\supp(\nu)),\label{eq:PoetzelbergerlowerMinkowski}
\end{equation}
where $\overline{\dim}_{M}\left(A\right)$ denotes the upper and $\underline{\dim}_{M}\left(A\right)$  the lower Minkowski
dimension of $A\subset\mathbb{R}^{d}$.  
\end{enumerate}
\end{rem}

\begin{rem}
By \prettyref{thm:Main} we infer the following one-to-one correspondence
between $\overline{D}_{r}\left(\nu\right)$, $r>0$ and $\beta_{\nu}\left(q\right)$,
$q\in(0,1$). For this note that $q_{r}=\overline{D}_{r}\left(\nu\right)/\left(r+\overline{D}_{r}\left(\nu\right)\right)$.
Hence, if $\overline{D}_{r}\left(\nu\right)>0$, then
\[
\beta_{\nu}\left(\frac{\overline{D}_{r}\left(\nu\right)}{\overline{D}_{r}\left(\nu\right)+r}\right)=\frac{r\overline{D}_{r}\left(\nu\right)}{\overline{D}_{r}\left(\nu\right)+r}.
\]
If we now set $q_{0}\coloneqq\sup\left\{ \overline{D}_{r}\left(\nu\right)/\left(r+\overline{D}_{r}\left(\nu\right)\right):r>0\right\} $,
then for all $q\in(q_{0},1]$ we have $\beta_{\nu}(q)=0$ and for
$q\in\left(0,q_{0}\right)$, we have $\beta_{\nu}(q)>0$ and
\[
\overline{D}_{\beta_{\nu}(q)/q}\left(\nu\right)=\frac{\beta_{\nu}\left(q\right)}{1-q}.
\]
At the end of this section in \prettyref{exa:ZhuiSelfSimilar}, we
exploit the fact that the upper quantization dimension is known for
inhomogeneous self-similar measures supported on self-similar sets
(see \cite{MR3454048}) and derive from this its $L^{q}$-spectrum
on $(0,1)$.
\end{rem}

Further, using $\overline{D}_{r}\left(\nu\right)=rq_{r}/\left(1-q_{r}\right),$
we can affirm a conjecture of Lindsay stated in his PhD thesis \cite{MR2704080}:
\begin{cor}[Continuity]
\label{cor:DrContinuous}The map $r\mapsto\overline{D}_{r}(\nu)$
is continuous on $\left(0,\infty\right)$.
\end{cor}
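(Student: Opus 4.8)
The plan is to reduce the assertion to the continuity of the auxiliary map $r\mapsto q_r$ on $(0,\infty)$. By \prettyref{thm:Main} together with the identities recorded in the remark following it, $\overline{D}_r(\nu)=rq_r/(1-q_r)$ for every $r>0$, with no assumption on $\beta_\nu$. If $\beta_\nu(x)=0$ for all $x>0$, then $q_r=0$ and $\overline{D}_r(\nu)=0$ for all $r>0$, so there is nothing to prove; hence from now on I assume $\sup_{x\in(0,1)}\beta_\nu(x)>0$. Once $r\mapsto q_r$ is shown to be continuous and $(0,1)$-valued, $r\mapsto rq_r/(1-q_r)$ is continuous as a quotient of continuous functions whose denominator is locally bounded away from $0$, and the corollary follows.

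First I would collect the qualitative features of $\beta_\nu$ that make $q_r$ well behaved. Each $\beta_{\nu,n}$ is convex in $q$ (log-convexity of $q\mapsto\sum_{C\in\mathcal{D}_n}\nu(C)^q$, i.e.\ Hölder) and non-increasing in $q$ (since $\nu(C)\le1$); both properties pass to the limsup $\beta_\nu$, which is moreover real-valued, hence continuous on $(0,\infty)$. Also $\beta_\nu(1)=0$ because $\sum_{C\in\mathcal{D}_n}\nu(C)=1$, and $\beta_\nu(q)\ge0$ for $q\in(0,1)$ because $\nu(C)^q\ge\nu(C)$. Consequently, for each fixed $r>0$ the function $q\mapsto\beta_\nu(q)-rq$ is continuous and strictly decreasing on $(0,\infty)$; it tends to $\beta_\nu(0^+)=\sup_{x\in(0,1)}\beta_\nu(x)>0$ as $q\downarrow0$ and equals $-r<0$ at $q=1$. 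Hence it has a unique zero, which lies in $(0,1)$; since $\{q>0:\beta_\nu(q)<rq\}=(q_r,\infty)$ this zero is exactly $q_r$, and in particular $q_r\in(0,1)$.

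Next I would establish that $r\mapsto q_r$ is strictly decreasing and jump-free. Monotonicity: for $r<r'$ we have $\beta_\nu(q_r)-r'q_r=-(r'-r)q_r<0$, so $q_r$ lies strictly to the right of the zero of $q\mapsto\beta_\nu(q)-r'q$, i.e.\ $q_{r'}<q_r$. To exclude a jump at some $r_0\in(0,\infty)$, set $a:=\lim_{r\uparrow r_0}q_r$ and $b:=\lim_{r\downarrow r_0}q_r$; comparing with a fixed $r_1<r_0<r_2$ shows $a,b\in(0,1)$, and by monotonicity $b\le q_{r_0}\le a$. Choosing $r_n\uparrow r_0$ with $q_{r_n}\to a$ and passing to the limit in $\beta_\nu(q_{r_n})-r_nq_{r_n}=0$, using joint continuity of $(q,r)\mapsto\beta_\nu(q)-rq$ on $(0,1)\times(0,\infty)$, yields $\beta_\nu(a)-r_0a=0$; likewise $\beta_\nu(b)-r_0b=0$. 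Uniqueness of the zero then forces $a=b=q_{r_0}$, so $r\mapsto q_r$ is continuous, and with it $r\mapsto\overline{D}_r(\nu)=rq_r/(1-q_r)$.

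I expect the only non-routine point to be the first step: pinning down the correct global picture of $\beta_\nu$ on $(0,\infty)$ — monotonicity, convexity and hence continuity, the normalization $\beta_\nu(1)=0$, and the strict sign change of $\beta_\nu(q)-rq$ — so that $q_r$ is genuinely the unique solution of $\beta_\nu(q)=rq$ and moves monotonically with $r$. Once this is in place, the continuity of $q_r$, and therefore of $\overline{D}_r(\nu)$, is a soft limiting argument.
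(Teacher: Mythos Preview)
Your proposal is correct and follows essentially the same route as the paper: reduce to the continuity of $r\mapsto q_r$ via the identity $\overline{D}_r(\nu)=rq_r/(1-q_r)$, dispose of the degenerate case $\beta_\nu\equiv 0$ on $(0,1)$, and then use the convexity, monotonicity, and continuity of $\beta_\nu$ together with $\beta_\nu(1)=0$ to see that $q_r$ is the unique zero of $q\mapsto\beta_\nu(q)-rq$ in $(0,1)$ and varies continuously in $r$. The paper's proof is more terse (it simply asserts that the case $q_r>0$ follows from $\beta_\nu$ being continuous and decreasing on $(0,1]$), while you spell out the strict monotonicity of $r\mapsto q_r$ and the no-jump argument explicitly; but the underlying idea is identical.
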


As another easy consequence, we reproduce a previous result of \cite{MR2326060}
on the stability of quantization dimension.
\begin{cor}[Finite stability]
\label{cor:mixed measure}For any two compactly supported probability
measures $\nu_{1}$ and $\nu_{2}$ , $p\in(0,1)$ and each $r>0$,
we have 
\[
\overline{D}_{r}\left(p\nu_{1}+(1-p)\nu_{2}\right)=\max\left\{ \overline{D}_{r}\left(\nu_{1}\right),\overline{D}_{r}\left(\nu_{2}\right)\right\} .
\]
\end{cor}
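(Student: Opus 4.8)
The plan is to derive this directly from \prettyref{thm:Main} through the finite stability of the $L^{q}$-spectrum on $(0,1)$. First a reduction: since $\er_{n,r}(\mu\circ\Phi_{a,b}^{-1})=|a|\,\er_{n,r}(\mu)$ as computed in the introduction, the constant $\log|a|$ is negligible against $\log n$, so $\overline{D}_{r}$ is invariant under invertible affine maps; as $\Phi_{a,b}$ also intertwines convex combinations with pushforwards, we may choose one affine $\Phi$ carrying the compact set $\supp(\nu_{1})\cup\supp(\nu_{2})$ into $\Q$ and assume henceforth that $\supp(\nu_{1}),\supp(\nu_{2})\subseteq\Q$. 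Write $\nu\coloneqq p\nu_{1}+(1-p)\nu_{2}$.

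The key step is the identity $\beta_{\nu}(q)=\max\{\beta_{\nu_{1}}(q),\beta_{\nu_{2}}(q)\}$ for every $q\in(0,1]$ (it is trivial for $q=1$, where all three vanish). The inequality ``$\geq$'' follows from $\nu(C)^{q}\geq p^{q}\nu_{i}(C)^{q}$, which gives $\beta_{\nu,n}(q)\geq\beta_{\nu_{i},n}(q)+q\log p/\log(2^{n})$ and hence $\beta_{\nu}(q)\geq\beta_{\nu_{i}}(q)$ upon taking $\limsup$, $i=1,2$. For ``$\leq$'' one uses subadditivity of $t\mapsto t^{q}$ on $[0,\infty)$ when $q\in(0,1]$: $\nu(C)^{q}\leq p^{q}\nu_{1}(C)^{q}+(1-p)^{q}\nu_{2}(C)^{q}$, so $\sum_{C\in\mathcal{D}_{n}}\nu(C)^{q}$ is bounded by twice the larger of $p^{q}\sum_{C}\nu_{1}(C)^{q}$ and $(1-p)^{q}\sum_{C}\nu_{2}(C)^{q}$; taking logarithms, dividing by $\log(2^{n})$ and letting $n\to\infty$ absorbs all constants and yields the reverse inequality.

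Now transfer this to the critical exponents $q_{r}(\cdot)$. For any compactly supported $\mu$ the map $\beta_{\mu}$ is non-increasing with $\beta_{\mu}(1)=0$, so $1\in\{q>0:\beta_{\mu}(q)<rq\}$ and this set is upward closed (if $\beta_{\mu}(q_{0})<rq_{0}$ and $q>q_{0}$ then $\beta_{\mu}(q)\leq\beta_{\mu}(q_{0})<rq_{0}<rq$); hence $q_{r}(\mu)\in[0,1]$ and $\{q\in(0,1]:\beta_{\mu}(q)<rq\}$ is an interval with left endpoint $q_{r}(\mu)$. By the displayed identity, $\{q\in(0,1]:\beta_{\nu}(q)<rq\}=\{q\in(0,1]:\beta_{\nu_{1}}(q)<rq\}\cap\{q\in(0,1]:\beta_{\nu_{2}}(q)<rq\}$, and intersecting the two intervals gives $q_{r}(\nu)=\max\{q_{r}(\nu_{1}),q_{r}(\nu_{2})\}$. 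Finally, $\overline{D}_{r}<\infty$ always (e.g. because $\er_{n,r}=O(n^{-1/d})$), so the identity $\overline{D}_{r}(\mu)=rq_{r}(\mu)/(1-q_{r}(\mu))$ recorded after \prettyref{thm:Main} forces every relevant $q_{r}$ into $[0,1)$, where $s\mapsto rs/(1-s)$ is strictly increasing; applying this increasing function to $q_{r}(\nu)=\max\{q_{r}(\nu_{1}),q_{r}(\nu_{2})\}$ produces $\overline{D}_{r}(\nu)=\max\{\overline{D}_{r}(\nu_{1}),\overline{D}_{r}(\nu_{2})\}$.

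I expect no genuine obstacle here: the whole argument is a short consequence of \prettyref{thm:Main}, the only care needed being the normalization by a common affine map and the check that $q_{r}\in[0,1)$ so that $s\mapsto rs/(1-s)$ is monotone where it is applied. An even more direct variant avoids $\beta_{\nu}$ altogether: by the remark after \prettyref{thm:Main} one has $\overline{D}_{r}(\mu)=\inf\{q>0:\sum_{Q\in\mathcal{D}}(\mu(Q)\Lambda(Q)^{r/d})^{q/(q+r)}<\infty\}$, and since $q/(q+r)\in(0,1)$ the same sub- and super-additivity arguments show that the series for $\nu$ converges precisely when both series for $\nu_{1}$ and $\nu_{2}$ converge; monotonicity of the series in $q$ then delivers the maximum formula at once.
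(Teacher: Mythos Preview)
Your argument is correct and follows the same route the paper indicates: deduce the result from \prettyref{thm:Main} by first establishing the finite stability $\beta_{p\nu_{1}+(1-p)\nu_{2}}=\max\{\beta_{\nu_{1}},\beta_{\nu_{2}}\}$ on $(0,1]$ and then translating this into $q_{r}(\nu)=\max\{q_{r}(\nu_{1}),q_{r}(\nu_{2})\}$ via the monotone map $s\mapsto rs/(1-s)$. The paper's one-line proof (``convexity of $\beta_{\nu}$ combined with \prettyref{thm:Main}'') leaves exactly these details implicit, and you have supplied them carefully, including the normalization to a common affine chart and the verification that $q_{r}<1$.
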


Our approach gives also rise to a slight improvement for the upper
bound of the lower quantization dimension. For this we introduce the
following quantity
\[
\dim_{\infty}\left(\nu\right)\coloneqq\liminf_{n\to\infty}\frac{\log\max_{Q\in\mathcal{D}_{n}}\nu\left(Q\right)}{-\log2^{n}}
\]
to sharpen \prettyref{eq:PoetzelbergerlowerMinkowski} from \cite{MR1866392}.
\begin{prop}
\label{prop:upperBoundForLowerDr} For $r>0$ such that $\underline{\dim}_{M}(\nu)/\left(r+\dim_{\infty}(\nu)\right)<1$,
we have
\[
\underline{D}_{r}\left(\nu\right)\leq\frac{r\underline{\dim}_{M}(\nu)}{r+\dim_{\infty}(\nu)-\underline{\dim}_{M}(\nu)}.
\]
\end{prop}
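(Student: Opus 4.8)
The plan is to control the lower $\left(\nu,r\right)$-partition entropy $\underline{h}_{r}$ from above and then feed the estimate into the inequality $\underline{D}_{r}\left(\nu\right)\leq r\underline{h}_{r}/\left(1-\underline{h}_{r}\right)$ recorded in \prettyref{thm:Main}. Precisely, the target is
\[
\underline{h}_{r}\leq\frac{\underline{\dim}_{M}(\nu)}{r+\dim_{\infty}(\nu)}.
\]
Granting this, the hypothesis $\underline{\dim}_{M}(\nu)/\left(r+\dim_{\infty}(\nu)\right)<1$ gives $\underline{h}_{r}<1$, and since $t\mapsto rt/(1-t)$ is strictly increasing on $[0,1)$ a one-line algebraic simplification yields
\[
\underline{D}_{r}\left(\nu\right)\leq\frac{r\underline{h}_{r}}{1-\underline{h}_{r}}\leq\frac{r\underline{\dim}_{M}(\nu)}{r+\dim_{\infty}(\nu)-\underline{\dim}_{M}(\nu)},
\]
as claimed. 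If $\beta_{\nu}\equiv0$ on $\left(0,\infty\right)$ there is nothing to prove, since then $\underline{D}_{r}\left(\nu\right)=0$ by \prettyref{thm:Main} while the right-hand side is non-negative (its denominator is positive precisely by the hypothesis); otherwise $\sup_{x\in(0,1)}\beta_{\nu}(x)>0$ and the cited inequality of \prettyref{thm:Main} is available, $\underline{h}_{r}<1$ having just been established.

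For the bound on $\underline{h}_{r}$, fix $n\in\N$ and consider the $\nu$-partition $P_{n}\coloneqq\left\{ Q\in\mathcal{D}_{n}:\nu(Q)>0\right\} \in\Pi_{\nu}$ coming from the level-$n$ dyadic partition of $\Q$, and put $m_{n}\coloneqq\max_{Q\in\mathcal{D}_{n}}\nu(Q)$. Since $\Lambda(Q)^{r/d}=2^{-nr}$ for every $Q\in\mathcal{D}_{n}$, we have $\max_{Q\in P_{n}}\nu(Q)\Lambda(Q)^{r/d}\leq m_{n}2^{-nr}$, so $P_{n}$ is admissible in \prettyref{eq:M-nu(r)} for every $x\in\left(0,2^{nr}/m_{n}\right)$ and hence $\mathcal{M}_{\nu,r}(x)\leq\card(P_{n})$ for such $x$. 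Because $\nu(Q)>0$ forces $Q\cap\supp(\nu)\neq\emptyset$, the cardinality $\card(P_{n})$ is at most the number of level-$n$ dyadic cubes meeting $\supp(\nu)$, whence $\liminf_{n}\log\card(P_{n})/\log2^{n}\leq\underline{\dim}_{M}(\nu)$ by the standard dyadic characterisation of the lower Minkowski dimension of $\supp(\nu)$. Since $m_{n}\leq1$ we may take $x_{n}\coloneqq2^{nr}/(2m_{n})\to\infty$, for which a direct computation of $\log x_{n}/\log 2^{n}$ gives
\[
\frac{\log\mathcal{M}_{\nu,r}(x_{n})}{\log x_{n}}\leq\frac{\log\card(P_{n})/\log2^{n}}{r-1/n+\log m_{n}/(-\log2^{n})}.
\]

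Now pass to a subsequence $\left(n_{k}\right)$ along which $\log\card(P_{n_{k}})/\log2^{n_{k}}$ converges to $\ell\coloneqq\liminf_{n}\log\card(P_{n})/\log2^{n}\leq\underline{\dim}_{M}(\nu)$. Along any subsequence one has $\liminf_{k}\log m_{n_{k}}/(-\log2^{n_{k}})\geq\dim_{\infty}(\nu)$ and $1/n_{k}\to0$, so the denominators above are eventually positive and bounded and have $\liminf$ at least $r+\dim_{\infty}(\nu)>0$, while the numerators tend to $\ell\geq0$; hence $\limsup_{k}\log\mathcal{M}_{\nu,r}(x_{n_{k}})/\log x_{n_{k}}\leq\ell/\left(r+\dim_{\infty}(\nu)\right)\leq\underline{\dim}_{M}(\nu)/\left(r+\dim_{\infty}(\nu)\right)$. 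As $x_{n_{k}}\to\infty$, this bounds $\underline{h}_{r}=\liminf_{x\to\infty}\log\mathcal{M}_{\nu,r}(x)/\log x$ by $\underline{\dim}_{M}(\nu)/\left(r+\dim_{\infty}(\nu)\right)$, which is the desired estimate.

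The one step genuinely requiring care is this last one: one must \emph{not} try to realise the two liminfs (of $\log\card(P_{n})/\log2^{n}$ and of $\log m_{n}/(-\log2^{n})$) along a common subsequence, but rather select the subsequence realising the numerator's liminf and exploit that passing to a subsequence can only \emph{increase} the liminf of the positive bounded denominators $r-1/n+\log m_{n}/(-\log2^{n})$ — i.e.\ the elementary inequality $\liminf_{n}a_{n}/(r+b_{n})\leq\left(\liminf_{n}a_{n}\right)/\left(r+\liminf_{n}b_{n}\right)$ for bounded $a_{n}\geq0$, $b_{n}\geq0$ and $r>0$. Everything else — the identity $\Lambda(Q)^{r/d}=2^{-nr}$, membership $P_{n}\in\Pi_{\nu}$, the inclusion $\{Q:\nu(Q)>0\}\subseteq\{Q:Q\cap\supp(\nu)\neq\emptyset\}$, and the computation of $\log x_{n}$ — is routine bookkeeping with the definitions.
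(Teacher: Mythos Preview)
Your proof is correct and follows essentially the same route as the paper: both arguments bound $\underline{h}_{r}$ by $\underline{\dim}_{M}(\nu)/(r+\dim_{\infty}(\nu))$ using the level-$n$ dyadic partition $P_{n}=\{Q\in\mathcal{D}_{n}:\nu(Q)>0\}$ and then invoke \prettyref{thm:Main}. The only cosmetic difference is that the paper introduces an auxiliary parameter $s<\dim_{\infty}(\nu)$, uses $\max_{Q\in\mathcal{D}_{n}}\nu(Q)\leq 2^{-sn}$ for large $n$ to obtain $\underline{h}_{r}\leq\underline{\dim}_{M}(\nu)/(r+s)$, and lets $s\uparrow\dim_{\infty}(\nu)$ at the end---thereby sidestepping your explicit subsequence argument---whereas you track $m_{n}$ directly and handle the two liminfs via the subsequence realising the numerator's liminf.
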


\begin{rem}
If the quantization dimension exists for some $r>0$, then it is given
by purely measure-geometric data encoded by the $\left(\nu,r\right)$-partition
entropy, namely we have $\overline{h}_{r}=\underline{h}_{r}=h_{r}$
and this value determines the quantization dimension $D_{r}\left(\nu\right)=rh_{r}/\left(1-h_{r}\right)$.

As a further consequence of our formalism, we derive a rigidity statement
that forces the quantization dimension to be constant in $r>0$.
\end{rem}

\begin{cor}
\label{cor:rigidity}Assume $\overline{\dim}_{M}(\nu)>0$. Then the
following statements are equivalent:
\begin{enumerate}
\item $\overline{D}_{r}\left(\nu\right)=\overline{\dim}_{M}(\nu)$ for some
$r>0$
\item $\overline{\mathfrak{R}}_{\nu}\left(q\right)=\overline{\dim}_{M}(\nu)$
for some $q\in\left(0,1\right)$.
\item $\partial^{-}\beta_{\nu}\left(1\right)\leq-\overline{\dim}_{M}(\nu)$.
\item The $L^{q}$-spectrum is linear on $[0,1]$, i.\,e\@. $\beta_{\nu}(q)=\overline{\dim}_{M}(\nu)(1-q)$,
$q\in[0,1]$.
\item $\overline{D}_{r}\left(\nu\right)=\overline{\dim}_{M}(\nu)$ for all
$r>0$.
\end{enumerate}
In particular, we find the necessary condition $\dim_{P}^{*}\left(\mu\right)=\overline{\dim}_{M}(\nu)$
or $\dim_{H}\left(\mu\right)=\overline{\dim}_{M}(\nu)$, which would
imply (3) of the above statements.
\end{cor}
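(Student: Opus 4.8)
The plan is to funnel all five statements through the $L^{q}$-spectrum $\beta_{\nu}$, using its convexity and monotonicity together with the identities $\overline{D}_{r}(\nu)=rq_{r}/(1-q_{r})=\overline{\mathfrak{R}}_{\nu}(q_{r})=\beta_{\nu}(q_{r})/(1-q_{r})$ from \prettyref{thm:Main} and its remarks, and the boundary data $\beta_{\nu}(1)=0$ and $\beta_{\nu}(0)=\overline{\dim}_{M}(\nu)=:s$ (recall that $\beta_{\nu}(0)$ is exactly the exponential growth rate of the number of dyadic cubes of positive $\nu$-measure, i.e.\ the dyadic box-counting exponent of $\supp(\nu)$). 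Two elementary preliminaries do the work. First, convexity forces $\beta_{\nu}$ to lie below the chord joining $(0,s)$ and $(1,0)$, so $\beta_{\nu}(q)\le s(1-q)$ on $[0,1]$; consequently $\overline{\mathfrak{R}}_{\nu}(q)\le s$ for $q\in(0,1)$ and, dividing by $q-1$, $\partial^{-}\beta_{\nu}(1)\ge -s$. Second, a rigidity lemma for convex functions: if $\beta_{\nu}$ agrees with an affine map $\ell$ at three points $a<c<b$ of $[0,1]$, then $\beta_{\nu}=\ell$ on $[a,b]$ — two applications of the convexity inequality give this (the outer pair applied around $c$ yields $\beta_{\nu}\ge\ell$ on $[a,b]$, and consecutive pairs yield $\beta_{\nu}\le\ell$).

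With these in hand I would close the cycle (1)$\Rightarrow$(2)$\Rightarrow$(4)$\Rightarrow$(5)$\Rightarrow$(1). For (1)$\Rightarrow$(2): from $\overline{D}_{r}(\nu)=s$ and $\overline{D}_{r}(\nu)=rq_{r}/(1-q_{r})\in(0,\infty)$ one reads off $q_{r}\in(0,1)$; moreover $\beta_{\nu}(q)\ge rq>0$ for $0<q<q_{r}$, so $\sup_{(0,1)}\beta_{\nu}>0$ and \prettyref{thm:Main} applies, giving $\overline{\mathfrak{R}}_{\nu}(q_{r})=\overline{D}_{r}(\nu)=s$ with $q:=q_{r}\in(0,1)$. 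For (2)$\Rightarrow$(4): $\overline{\mathfrak{R}}_{\nu}(q_{*})=s$ with $q_{*}\in(0,1)$ forces $\beta_{\nu}(q_{*})=s(1-q_{*})$, so $\beta_{\nu}$ meets the line $q\mapsto s(1-q)$ at the three points $0,q_{*},1$, and the rigidity lemma yields $\beta_{\nu}(q)=s(1-q)$ on all of $[0,1]$, which is (4). For (4)$\Rightarrow$(5): with $\beta_{\nu}$ explicit, the line of slope $r$ meets the graph at $q_{r}=s/(s+r)\in(0,1)$, so $\overline{D}_{r}(\nu)=rq_{r}/(1-q_{r})=s$ for every $r>0$; and (5)$\Rightarrow$(1) is trivial.

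Next I would prove (3)$\Leftrightarrow$(4) and deduce the final clause. The direction (4)$\Rightarrow$(3) is immediate: near $1$, $\beta_{\nu}$ is affine of slope $-s$, so $\partial^{-}\beta_{\nu}(1)=-s\le -\overline{\dim}_{M}(\nu)$. Conversely, $\partial^{-}\beta_{\nu}(1)\le -s$ together with the automatic bound $\partial^{-}\beta_{\nu}(1)\ge -s$ forces $\partial^{-}\beta_{\nu}(1)=-s$; then the supporting line of the convex function $\beta_{\nu}$ at $1$ with slope $-s$ gives $\beta_{\nu}(q)\ge s(1-q)$ for all $q$, which against $\beta_{\nu}(q)\le s(1-q)$ on $[0,1]$ yields (4). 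Finally, for the ``in particular'' part I invoke the chain $\dim_{H}(\nu)\le\dim_{P}^{*}(\nu)\le -\partial^{-}\beta_{\nu}(1)$ from the remark following \prettyref{thm:Main}: if either $\dim_{H}(\nu)=s$ or $\dim_{P}^{*}(\nu)=s$, then $-\partial^{-}\beta_{\nu}(1)\ge s$, i.e.\ (3) holds, hence so do all five statements.

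I expect the step requiring the most care to be (2)$\Rightarrow$(4): upgrading a single pointwise equality $\overline{\mathfrak{R}}_{\nu}(q_{*})=\overline{\dim}_{M}(\nu)$ to global linearity of $\beta_{\nu}$ on $[0,1]$. The tempting shortcut — a convex function lying below its chord and matching it at the endpoints $0,1$ must be affine — is \emph{false}; one genuinely needs a \emph{strictly interior} third contact point (here $q_{*}$) together with the precise boundary values $\beta_{\nu}(0)=\overline{\dim}_{M}(\nu)$ and $\beta_{\nu}(1)=0$ to place all three points on one line before the rigidity lemma can bite. The implication (3)$\Rightarrow$(4) is softer, being a direct sandwich of $\beta_{\nu}$ between $s(1-q)$ and the forced supporting line at $1$. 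The remaining diligence is pure bookkeeping: verifying $\beta_{\nu}(0)=\overline{\dim}_{M}(\nu)$, and ruling out — under the standing assumption $\overline{\dim}_{M}(\nu)>0$ — the degenerate situations $q_{r}\in\{0,1\}$ and $\beta_{\nu}\equiv 0$ on $(0,1)$.
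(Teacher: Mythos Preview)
Your proof is correct and follows exactly the approach the paper intends: the paper's proof is the single sentence ``This follows from the convexity of $\beta_{\nu}$ combined with \prettyref{thm:Main},'' and you have faithfully unpacked that sentence into the chord bound $\beta_{\nu}(q)\le \overline{\dim}_{M}(\nu)(1-q)$, the three-point rigidity of a convex function, and the identities $\overline{D}_{r}(\nu)=rq_{r}/(1-q_{r})=\overline{\mathfrak{R}}_{\nu}(q_{r})$. Your handling of the degenerate cases (checking $q_{r}\in(0,1)$ and $\sup_{(0,1)}\beta_{\nu}>0$ before invoking \prettyref{thm:Main}) and of the ``in particular'' clause via the chain $\dim_{H}(\nu)\le\dim_{P}^{*}(\nu)\le -\partial^{-}\beta_{\nu}(1)$ is also exactly what the paper's brevity leaves implicit.
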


As a second main result, we find often easily verifiable necessary
conditions that guarantee upper and lower quantization dimension to
match.
\begin{defn}
We define two notions of regularity.
\begin{enumerate}
\item We call $\nu$ \emph{multifractal-regular in $r$ ($r$-MF-regular),}
if $\underline{F}_{\nu,r}=\overline{F}_{\nu,r}$.
\item We call the measure $\nu$ \emph{$L^{q}$-regular for $r$} if
\begin{enumerate}
\item $\beta_{\nu}\left(q\right)=\liminf_{n}\beta_{\nu,n}\left(q\right)\in\R$
for all $q\in\left(q_{r}-\varepsilon,q_{r}\right)$, for some $\varepsilon>0$,
or\label{enu:-LqRegularity1}
\item $\beta_{\nu}\left(q_{r}\right)=\liminf_{n}\beta_{\nu,n}\left(q_{r}\right)$
and $\beta_{\nu}$ is differentiable in $q_{r}$.
\end{enumerate}
\end{enumerate}
\end{defn}

The following theorem shows that the spectral partition function is
a valuable auxiliary concept to determine the quantization dimension
for a given measure $\nu$.
\begin{thm}
\label{thm:LqRegularImpliesRegular} The following regularity implications
hold for $r>0$:
\[
\nu\,\,\text{is }L^{q}\text{-regular for }r\ensuremath{\implies}\nu\,\,\text{is \ensuremath{r}-MF-regular \ensuremath{\implies}}\underline{D}_{r}\left(\nu\right)=\overline{D}_{r}\left(\nu\right)=\overline{\mathfrak{R}}_{\nu}\left(q_{r}\right).
\]
\end{thm}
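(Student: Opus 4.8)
The plan is to establish the two implications in \prettyref{thm:LqRegularImpliesRegular} in order. For the second implication ($r$-MF-regularity $\implies$ matching quantization dimensions), I would rely on the chain of inequalities and identities assembled in \prettyref{eq:identities}, namely $\overline{F}_{r}=q_{r}=\overline{h}_{r}\geq\underline{h}_{r}\geq\underline{F}_{r}$, together with \prettyref{thm:Main}. If $\nu$ is $r$-MF-regular, then by definition $\underline{F}_{\nu,r}=\overline{F}_{\nu,r}$, so all the quantities in \prettyref{eq:identities} collapse to the single value $q_{r}$; in particular $\underline{h}_{r}=q_{r}$. Feeding this into the bounds of \prettyref{thm:Main},
\[
\frac{rq_{r}}{1-q_{r}}=\frac{r\underline{F}_{r}}{1-\underline{F}_{r}}\leq\underline{D}_{r}\left(\nu\right)\leq\frac{r\underline{h}_{r}}{1-\underline{h}_{r}}=\frac{rq_{r}}{1-q_{r}},
\]
forcing equality throughout, and since $\overline{D}_{r}\left(\nu\right)=\overline{\mathfrak{R}}_{\nu}(q_{r})=rq_{r}/(1-q_{r})$ as well (using the identities recorded in the first remark after \prettyref{thm:Main}), we conclude $\underline{D}_{r}\left(\nu\right)=\overline{D}_{r}\left(\nu\right)=\overline{\mathfrak{R}}_{\nu}(q_{r})$. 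One caveat: this uses the hypothesis $\sup_{x\in(0,1)}\beta_{\nu}(x)>0$; in the degenerate case $\beta_{\nu}(x)=0$ for all $x>0$ the conclusion is immediate from the second alternative of \prettyref{thm:Main} since then $D_{r}(\nu)=0=\overline{\mathfrak{R}}_{\nu}(q_{r})$, so this needs only a brief separate remark.

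For the first implication ($L^{q}$-regularity for $r$ $\implies$ $r$-MF-regularity), I need to show that the liminf-quantity $\underline{F}_{\nu,r}$ already attains the value $q_{r}=\overline{F}_{\nu,r}$. The key observation is that, by the change of variables implicit in the definition of $N_{\nu,\alpha,r}(n)$, the coarse multifractal count is governed by a \emph{spectral partition function} combining $\nu(Q)$ and $\Lambda(Q)^{r/d}=2^{-nr}$; more precisely, on $\mathcal{D}_n$ the condition $\nu(Q)\Lambda(Q)^{r/d}\geq 2^{-\alpha n}$ reads $\nu(Q)\geq 2^{-(\alpha-r)n}$. Thus $\mathcal{N}_{\nu,\alpha,r}(n)$ counts dyadic cubes of generation $n$ with $\nu$-mass at least $2^{-(\alpha-r)n}$, which is exactly a coarse multifractal box-count for $\nu$ at scale $2^{-n}$ with exponent $\alpha-r$. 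The Legendre-type relationship between such box-counts and $\beta_{\nu}$ is standard: $\sup_{\alpha}\overline{F}_{\nu,r}(\alpha)/\alpha$ equals the zero (or sign-change point) of $q\mapsto\beta_{\nu}(q)-rq$, and this identification goes through verbatim with liminf in place of limsup provided $\beta_{\nu,n}(q)$ converges (not merely has a limsup) near the relevant exponent — which is precisely what hypothesis \eqref{enu:-LqRegularity1} of $L^{q}$-regularity guarantees on $(q_{r}-\varepsilon,q_{r})$. In the alternative case where $\beta_{\nu}$ is differentiable at $q_{r}$ with the liminf attained there, one instead uses a tangent-line (Legendre duality) argument at the single point $q_{r}$: differentiability pins down the optimizing exponent $\alpha$, and the convergence of $\beta_{\nu,n}(q_{r})$ transfers the upper-bound computation to a lower bound for $\underline{F}_{\nu,r}(\alpha)$.

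The main obstacle I anticipate is the second alternative of $L^{q}$-regularity, clause (b): there one only has information at the \emph{single} point $q_{r}$, not on a neighborhood, so the usual argument that passes from a convergent one-parameter family $\beta_{\nu,n}(\cdot)$ to a convergent coarse spectrum must be replaced by a more delicate one-point Legendre-duality estimate. Concretely, I would fix the exponent $\alpha^{*}$ dual to $q_{r}$ via $\alpha^{*}=\beta_{\nu}'(q_{r})\cdot(\text{something})$ — dictated by the tangency condition $\beta_{\nu}(q_{r})-rq_{r}=0$ and $\partial(\beta_{\nu}(q)-rq)|_{q_{r}}=\beta_{\nu}'(q_{r})-r$ — and show that for this $\alpha^{*}$ one has $\underline{F}_{\nu,r}(\alpha^{*})/\alpha^{*}\geq q_{r}$ by a Markov/Chebyshev-type lower bound on the cube count, using the convergence of $\beta_{\nu,n}(q_{r})$ to control the sum $\sum_{C\in\mathcal{D}_n}\nu(C)^{q_{r}}$ from below along \emph{every} subsequence. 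The differentiability is exactly what rules out a ``corner'' that would let the count oscillate. I expect this to mirror closely the corresponding argument in \cite{KN2022b} for Kreĭn–Feller operators, so the novelty is bookkeeping rather than a new idea, but it is the step requiring the most care. Once both implications are in place, the theorem follows by concatenation.
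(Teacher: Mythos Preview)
Your proposal is correct and matches the paper's approach: for the second implication you argue exactly as the paper does, collapsing \prettyref{eq:identities} to $q_{r}$ and feeding this into the bounds of \prettyref{thm:Main}; for the first implication the paper simply invokes \prettyref{prop:LowerBoundUpper/lowerSpecDim} (imported from \cite{KN2022b}), which packages precisely the Legendre-duality/Chebyshev argument you sketch for both clauses (a) and (b) of $L^{q}$-regularity. In other words, the ``main obstacle'' you anticipate is exactly the content of that cited proposition, so your outline and the paper's proof differ only in that the paper outsources the work rather than reproducing it.
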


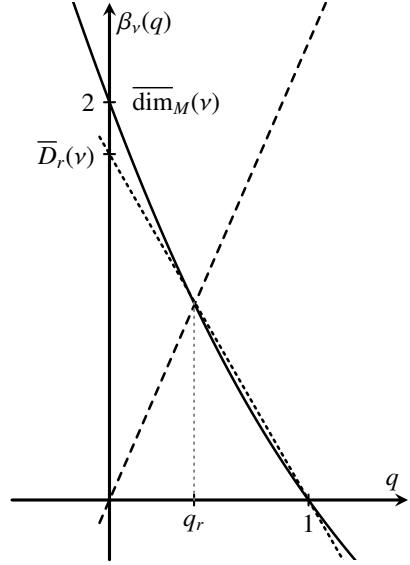
\begin{figure}
\center{\begin{tikzpicture}[scale=0.98, every node/.style={transform shape},line cap=round,line join=round,>=triangle 45,x=1cm,y=1cm] \begin{axis}[ x=2.7cm,y=2.7cm, axis lines=middle, axis line style={very thick},ymajorgrids=false, xmajorgrids=false, grid style={thick,densely dotted,black!20}, xlabel= {$q$}, ylabel= {$\beta_\nu (q)$}, xmin=-0.49 , xmax=1.5 , ymin=-0.3, ymax=2.5,x tick style={thick, color=black}, xtick={0, .425,1},xticklabels = {0,$q_r$,1}, y tick style={thick, color=black}, ytick={0,1/0.575,2},yticklabels = {0,$\overline{D}_{r}(\nu)$,2}] \clip(-0.5,-0.3) rectangle (4,4); 
\draw[line width=1pt,smooth,samples=180,domain=-0.3:3.4] plot(\x,{log10(0.08^((\x))+0.2^((\x))+0.06^((\x))+0.66^((\x)))/log10(2)}); 
%\draw[line width=2pt, smooth,samples=100,domain=-0.2:4.4] plot(\x,{(\x)}); 
%\draw [line width=1pt,dashed, domain=-0.05 :1.4] plot(\x,{0.71*(0.57-\x)+0.2}); 
%\draw [line width=01pt,dotted, domain=-0.05 :4.4] plot(\x,{(((log10(0.36))/(log10(2))+1)*(\x)+2)});
\draw [line width=01pt,dotted, domain=-0.05 :4.4] plot(\x,{1/(1-0.425)*(1-\x)});
\draw [line width=01pt,dashed, domain=-0.05 :4.4] plot(\x,{2.3*\x});
 
%\fill (0.57 ,0.2) circle[radius=2pt];
%\fill (2.48 ,0 ) circle[radius=2pt];
\node[circle,draw] (c) at (2.48 ,0 ){\,};
%\draw [line width=1pt ] (0.03 ,0.2)–(-0.03,0.2);
%\draw [line width=1pt ] (0.03 ,0.355)–(-0.03,0.355);
%\draw [line width=1pt ] (0.03 ,1)–(-0.03,1);
%\draw [line width=1pt ] (0.03 ,.38)–(-0.03,0.38);
%\draw [line width=1pt ] (0.03 ,.288)–(-0.03,.288);
%\draw [line width=1pt ] (0.03 ,0.605)–(-0.03,0.605 );

\draw [line width=.7pt,dotted, gray] (0.425 ,0.)--(0.425,1); 
%\draw [line width=.7pt,dotted, gray] (0  ,1.0 )-- (1,1);
%\draw [line width=.7pt,dotted, gray] (0.38  ,0.38 )–(0,0.38 );
%\draw [line width=.7pt,dotted, gray] (0.3550 ,0.3550)–(-0.15,0.355 );
%\draw [line width=.7pt,dotted, gray] (-0.15,0.355 )–(-0.18,0.43 );
%\draw [line width=.7pt, dotted , gray] (-0.03 ,0.440)–(-0.15,0.44 );
%\draw (2.34 ,-0.01) node[anchor=north west] {{$q^N$}}; 
%\draw (-0.31,2.48) node[anchor=north west] {$\GL_{\J_\nu}^N(q)$}; 
%\draw (-0.25 ,0.68 ) node[anchor=north west] {$\Large \widehat\beta_\nu(-\alpha)$}; 
%\draw (-0.13 ,0.43 ) node[anchor=north west] {$\Large \displaystyle{s_{\nu}}$};
%\draw (3.37 ,-0.02) node[anchor=north west]{$q$};
%\draw (-0.31 ,0.345 ) node[anchor=north west] {$\Large \displaystyle{\dim_H(\nu) }$};
\draw (0.07 ,2.13 ) node[anchor=north west] {$ \displaystyle{\overline{\dim}_M(\nu)}$}; 
\end{axis} 
\end{tikzpicture}}

\caption{\label{fig:Moment-generating-function}The $L^{q}$-spectrum $\beta_{\nu}$
for the self-similar measure $\nu$ supported on the\emph{ Sierpi\'{n}ski
tetraeder} in $\R^{3}$ with all four contraction ratios equal $1/2$
and with probability vector $\left(0.66,0.2,0.08,0.06\right)$. $\beta_{\nu}\left(0\right)=\overline{\dim}_{M}\left(\nu\right)=2$.
For $r=2.3$ (slope of the dashed line) the intersection of the graph
of $\beta_{\nu}$ and the dashed line determines $q_{r}$. The dotted
line through $\left(q_{r},\beta_{\nu}\left(q_{r}\right)\right)$ and
$\left(0,1\right)$ intersects the vertical axis in $\overline{D}_{r}\left(\nu\right)$.}
\end{figure}

Special cases for the following corollary have a long history where
the results concerning the quantization dimension were obtained only
under the open set condition, strong open set condition or even the
strong separation condition \cite{MR1434978,MR1844394,MR1912380,MR2776065,MR3632842,MR3636381}.
Unlike here, these papers investigated also the quantization coefficients,
which provide more accurate information about the approximation rate.
Recall e.\,g\@. Graf and Luschgy's results \cite{MR1844394,MR2055056}
for self-similar measures: Let $(f_{i})_{i=1}^{N}$ be a family of
contractive similitudes on $\mathbb{R}^{d}$ with contraction ratios
$\left(r_{i}\right)_{i=1}^{N}\in\left(0,1\right)^{N}$. According
to \cite{MR625600}, for a given probability vector $(p_{i})_{i=1}^{N}$,
there exists a unique compactly supported Borel probability measure
on $\mathbb{R}^{d}$ such that $\nu=\sum_{i=1}^{N}p_{i}\cdot\nu\circ f_{i}^{-1}$.
This measure is called the \emph{self-similar measure.} We say that
$(f_{i})_{i=1}^{N}$ satisfies the \emph{open set condition} (OSC)
if there exists a non-empty open set $U$ such that $f_{i}(U)\cap f_{j}(U)=\emptyset$
for all $i\neq j$, and $f_{i}(U)\subset U$ for all $i=1,\cdots,N$.
If additionally $\supp\nu\cap U\neq\emptyset$ for the open set $U$
in the OSC, we say that the \emph{strong open set condition} (SOSC)
holds. For $r\in(0,\infty)$, let $k_{r}$ be the positive real number
given by 
\begin{equation}
\sum_{i=1}^{N}\left(p_{i}r_{i}^{r}\right)^{k_{r}/\left(k_{r}+r\right)}=1,\label{eq:gl-1}
\end{equation}
then, assuming the OSC, the quantization dimension is given by $D_{r}(\nu)=k_{r}.$
This result has been generalized by Lindsay (\cite{MR2704080}) to
conformal IFS under the strong open set condition (see also \cite{MR1877974})
using the notion of the associated pressure function. We are able
to generalize this result further as follows:

Let $U\subset\R^{d}$ be an open set. We say a $C^{1}$-map $S:U\rightarrow\R^{d}$
is \emph{conformal }if for every $x\in U$ the matrix $S'(x)$, giving
the total derivative of $S$ in $x$, satisfies $|S'(x)\cdot y|=\left\Vert S'(x)\right\Vert |y|$
for all $y\in\R^{d}$\emph{ and $\left\Vert S'_{u}(x)\right\Vert \coloneqq\sup_{|z|=1}|S'_{u}(x)\cdot z|>0$.}
A family of conformal mappings $\left(S_{i}:K\rightarrow K\right)_{i\in\left\{ 1,\ldots,N\right\} }$,
for some compact set $K$ and $N\geq2$ is a $C^{1}$-\emph{conformal
iterated function system }($C^{1}$-cIFS) if for each $i\in I$, the
contraction $S_{i}$ extends to an injective conformal map $S_{i}:U\rightarrow U$
on an open set $U\supset K$ and such that \emph{$\sup\left\{ \left\Vert S'_{i}(x)\right\Vert :x\in U\right\} <1$.
}The unique set $L$ such that $L=\bigcup_{i=1}^{N}S_{i}\left(L\right)$
is called the \emph{self-conformal set }with respect to $\left(S_{i}\right)_{i\in\left\{ 1,\ldots,N\right\} }$.
For any Gibbs measure $\mu$ on the full shift $\left\{ 1,\ldots,N\right\} ^{\N}$
we call $\nu\coloneqq\mu\circ\pi^{-1}$ the Gibbs measure with respect
to a $\left(S_{i}\right)_{i\in\left\{ 1,\ldots,N\right\} }$ whenever
$\pi:\left\{ 1,\ldots,N\right\} ^{\N}\to L$ denotes the canonical
coding map.
\begin{cor}
\label{cor:sefl-conformalWithOverlap} Let $\nu$ be a Gibbs measure
with respect to a $C^{1}$-cIFS on $\R^{d}$ with no assumptions on
the separation conditions. Then, for $r>0$, 
\[
D_{r}\left(\nu\right)=\frac{rq_{r}}{1-q_{r}}.
%\overline{\mathfrak{R}}_{\nu}\left(q_{r}\right)%
\]
\end{cor}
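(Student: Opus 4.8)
The plan is to deduce the statement directly from Theorem~\ref{thm:LqRegularImpliesRegular}: it suffices to show that every Gibbs measure $\nu$ with respect to a $C^{1}$-cIFS is $L^{q}$-regular for each $r>0$. Indeed, $L^{q}$-regularity then forces $\nu$ to be $r$-MF-regular, hence $\underline{D}_{r}(\nu)=\overline{D}_{r}(\nu)=\overline{\mathfrak{R}}_{\nu}(q_{r})$, and since $\overline{D}_{r}(\nu)=rq_{r}/(1-q_{r})$ holds unconditionally (Remark~(1) after Theorem~\ref{thm:Main}), we obtain $D_{r}(\nu)=rq_{r}/(1-q_{r})$. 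So the whole corollary reduces to verifying $L^{q}$-regularity.

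The decisive input---and the point where the absence of any separation condition is absorbed---is that for a Gibbs measure $\nu$ associated with a $C^{1}$-cIFS on $\R^{d}$ the $L^{q}$-spectrum exists as a genuine limit, $\beta_{\nu}(q)=\lim_{n}\beta_{\nu,n}(q)$ for every $q>0$, irrespective of overlaps; this is exactly the contribution of \parencite{MR1838304,MR2322179}. A small technical bridge is needed to apply it here: one must pass between the dyadic-cube moment sums $\sum_{C\in\mathcal{D}_{n}}\nu(C)^{q}$ used throughout this paper and the ball- or cylinder-based moment sums of those references. This comparison only introduces a multiplicative factor with bounded exponent (bounded overlap between dyadic grids and balls of comparable radius), so the limiting partition function is unaffected. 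Granting the limit, we have $\beta_{\nu}(q)=\liminf_{n}\beta_{\nu,n}(q)$, and this value is finite for $q\in(0,1)$ since there $0\le\beta_{\nu,n}(q)\le d(1-q)$: the lower bound follows from subadditivity of $t\mapsto t^{q}$ together with $\sum_{C\in\mathcal{D}_{n}}\nu(C)=1$, the upper bound from H\"older's inequality applied to the at most $2^{nd}$ cubes of $\mathcal{D}_{n}$ meeting $(0,1]^{d}$.

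To conclude, we locate $q_{r}$. By the above, $\beta_{\nu}$ is a convex, non-increasing, real-valued---hence continuous---function on $(0,\infty)$ with $\beta_{\nu}(0)=\overline{\dim}_{M}(\nu)$ and $\beta_{\nu}(1)=0$. If $\overline{\dim}_{M}(\nu)>0$, the line $q\mapsto rq$ meets the graph of $\beta_{\nu}$ at some $q_{r}\in(0,1)$, and for $\varepsilon>0$ small enough that $q_{r}-\varepsilon>0$ the defining condition of $L^{q}$-regularity for $r$ holds on $(q_{r}-\varepsilon,q_{r})$ by the previous paragraph. If instead $\overline{\dim}_{M}(\nu)=0$---which for a $C^{1}$-cIFS means the maps share a common fixed point and $\nu$ is a Dirac mass---then $\beta_{\nu}\equiv0$ on $[0,1]$, so $q_{r}=0$, the differentiability form of $L^{q}$-regularity holds trivially, and $D_{r}(\nu)=0=rq_{r}/(1-q_{r})$. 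In either case $\nu$ is $L^{q}$-regular for $r$, and Theorem~\ref{thm:LqRegularImpliesRegular} completes the proof.

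The only genuine obstacle I anticipate is lodged in the black box of the second paragraph: one must check that the existence theorem for the $L^{q}$-spectrum of \parencite{MR1838304,MR2322179} applies verbatim to an arbitrary Gibbs measure (for a H\"older potential on the full shift) pushed forward by a merely $C^{1}$ conformal IFS, that its conclusion really covers the exponent range $q\in(0,1)$ needed here, and that it is genuinely insensitive to overlaps; everything downstream is formal. As a consistency check, when the OSC holds $\beta_{\nu}$ solves $\sum_{i}p_{i}^{q}r_{i}^{\beta_{\nu}(q)}=1$, so $\beta_{\nu}(q_{r})=rq_{r}$ amounts to $\sum_{i}(p_{i}r_{i}^{r})^{q_{r}}=1$; comparison with \eqref{eq:gl-1} yields $q_{r}=k_{r}/(k_{r}+r)$, i.e.\ $rq_{r}/(1-q_{r})=k_{r}$, so the formula recovers the Graf--Luschgy value.
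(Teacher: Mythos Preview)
Your proposal is correct and follows essentially the same route as the paper: invoke the existence of the $L^{q}$-spectrum as a limit from \parencite{MR1838304,MR2322179} to obtain $L^{q}$-regularity for each $r>0$, then apply Theorem~\ref{thm:LqRegularImpliesRegular} (together with Theorem~\ref{thm:Main}) to conclude. You have supplied considerably more detail than the paper's one-line proof---the dyadic/ball comparison, the location of $q_{r}$, the degenerate case, and the OSC consistency check---but the underlying argument is the same.
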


\begin{rem}
The study of the $L^{q}$-spectra is a highly active research area.
For instance, the $L^{q}$-spectrum for certain classes of self-similar
measures with overlaps has been computed explicitly by Ngai and Lau
\cite{MR1644468} and Ngai and Xie \cite{MR3897401}. The formulae
derived therein combined with \prettyref{cor:sefl-conformalWithOverlap}
will lead to a number of interesting formulae for the quantization
dimension.
\end{rem}

Let us now consider an \emph{inhomogeneous self-similar measure $\nu$}
given as the unique solution to 
\begin{equation}
\nu=p_{0}\mu+\sum_{i=1}^{N}p_{i}\nu\circ f_{i}^{-1}.\label{eq:InhomoSelfSimilarMeasure}
\end{equation}
In here, $\mu$ is a compactly supported Borel probability measure,
called the \emph{condensation measure}, $(f_{i})_{i=1}^{N}$ is a
family of contractive similitudes on $\mathbb{R}^{d}$ with contraction
ratios $\left(r_{i}\right)_{i=1,\ldots,N}$ and $\left(p_{i}\right)_{i=0,\ldots,N}\in(0,1)^{N}$
denotes a probability vector. We say the system fulfills the \emph{inhomogeneous
open set condition} (iOSC), if there exists a bounded non-empty open
set $U$ such that, $f_{i}\left(U\right)\subset U$, $\supp(\mu)\subset\overline{U}$,
$f_{i}\left(U\right)\cap\supp\mu=\emptyset$, $f_{i}\left(U\right)\cap f_{j}\left(U\right)=\emptyset$
for all $i\neq j$. The $L^{q}$-spectrum and multifractal properties
of such measures have first been treated in \cite{MR2440882,MR2285110}
and later in the generality needed for our purposes in \cite{MR3201323}.
This example has already been solved by Zhu in \cite{MR3345508} for
the special choice of a self-similar measure $\mu$, and he was also
able to bound the quantization coefficients away from zero and infinity
in this context. In the following we will need the function $\varrho_{\nu}:\left(0,1\right)\to\R$
given as the solution to $\ensuremath{\sum_{k=1}^{N}p_{k}^{q}r_{k}^{\varrho_{\nu}\left(q\right)}=1}$.
\begin{cor}
\label{cor:InhomIFS} Let $\nu$ be an inhomogeneous self-similar
measure with condensation measure $\mu$ under iOSC such that the
$L^{q}$-spectrum $\beta_{\mu}$ of $\mu$ exists as a limit on $(0,1)$.
Then the quantization dimension exists for every $r>0$ with
\[
D_{r}\left(\nu\right)=\frac{rq_{r}}{1-q_{r}},%\overline{\mathfrak{R}}_{\nu}\left(q_{r}\right).
\]
where  $q_{r}$ %and $\mathfrak{R}_{\nu}$ 
is defined in terms of the
$L^{q}$-spectrum of $\nu$, which is given, for $q\in\left(0,1\right)$,
by
\[
\beta_{\nu}\left(q\right)=\max\left\{ \beta_{\mu}\left(q\right),\varrho_{\nu}\left(q\right)\right\} .
\]
\end{cor}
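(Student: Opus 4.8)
The plan is to read off the $L^{q}$-spectrum of $\nu$ from \cite{MR3201323} and then feed it into the regularity machinery of \prettyref{thm:LqRegularImpliesRegular}. Under the iOSC the open sets $f_{1}(U),\dots,f_{N}(U)$ and $\supp\mu$ are pairwise disjoint, so iterating the fixed-point relation \prettyref{eq:InhomoSelfSimilarMeasure} decomposes, for $C\in\mathcal{D}_{n}$, the mass $\nu(C)$ --- up to bounded overlap --- into the purely self-similar weights $p_{\omega}=\prod_{i}p_{\omega_{i}}$ along finite words $\omega$ together with condensation terms comparable to $p_{\omega}\,\mu(f_{\omega}^{-1}C)$. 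Summing the $q$-th powers over $C\in\mathcal{D}_{n}$ and grouping by generation, the exponential rate turns out to be the larger of the rate carried by the similitudes alone, governed by $\varrho_{\nu}(q)$ with $\sum_{k}p_{k}^{q}r_{k}^{\varrho_{\nu}(q)}=1$, and the rate inherited from $\mu$, namely $\beta_{\mu}(q)$; moreover, since the iOSC makes this decomposition exact up to bounded overlap, the defining limit of $\beta_{\nu}$ exists on $(0,1)$ as soon as that of $\beta_{\mu}$ does. This is the content of \cite{MR3201323}, which I would quote: $\beta_{\nu}(q)=\liminf_{n}\beta_{\nu,n}(q)=\limsup_{n}\beta_{\nu,n}(q)=\max\{\beta_{\mu}(q),\varrho_{\nu}(q)\}\in\R$ for every $q\in(0,1)$.

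Next I would check that $q_{r}\in(0,1)$. If $\beta_{\nu}(x)=0$ for all $x>0$ (a degenerate situation, e.g.\ $N=1$ with $\mu$ atomic), then $q_{r}=0$ and the claimed identity reads $D_{r}(\nu)=0$, which holds by \prettyref{thm:Main}. Otherwise $\sup_{x\in(0,1)}\beta_{\nu}(x)>0$ (which occurs e.g.\ whenever $\varrho_{\nu}(0)$, the similarity dimension of $(f_{i})_{i=1}^{N}$, is positive); since $\beta_{\nu}$ is convex with $\beta_{\nu}(1)=0$, convexity forces $\beta_{\nu}(q)\ge c>0$ for all $q$ in a right neighbourhood of $0$, and as $rq\to0$ when $q\downarrow0$ this gives $\beta_{\nu}(q)>rq$ near $0$, hence $q_{r}>0$. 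On the other hand $\beta_{\nu}(q)\le d(1-q)\to0<r$ as $q\uparrow1$, so $\beta_{\nu}(q)<rq$ for $q$ close to $1$ and therefore $q_{r}<1$.

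Since $q_{r}$ is an interior point of $(0,1)$, the first paragraph shows $\beta_{\nu}(q)=\liminf_{n}\beta_{\nu,n}(q)\in\R$ for all $q$ in a left neighbourhood of $q_{r}$, i.e.\ $\nu$ is $L^{q}$-regular for $r$ (first alternative in that definition). Hence \prettyref{thm:LqRegularImpliesRegular} applies and yields $\underline{D}_{r}(\nu)=\overline{D}_{r}(\nu)=\overline{\mathfrak{R}}_{\nu}(q_{r})$, so $D_{r}(\nu)$ exists. Finally, $\beta_{\nu}$ is convex, hence continuous at $q_{r}$, so the definition $q_{r}=\inf\{q>0:\beta_{\nu}(q)<rq\}$, together with $\beta_{\nu}(q)\ge rq$ for $q<q_{r}$, forces $\beta_{\nu}(q_{r})=rq_{r}$; therefore $\overline{\mathfrak{R}}_{\nu}(q_{r})=\beta_{\nu}(q_{r})/(1-q_{r})=rq_{r}/(1-q_{r})$, which is the asserted value.

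The only genuine obstacle is the first step: one must extract from \cite{MR3201323} not merely the formula $\beta_{\nu}=\max\{\beta_{\mu},\varrho_{\nu}\}$ for the upper $L^{q}$-spectrum, but the sharper statement that the defining limit exists on $(0,1)$ whenever it does for $\mu$. If that refinement is not available directly, I would run the generation-by-generation estimate sketched above, using the iOSC to turn the decomposition of $\nu(C)$ into cylinder contributions into a two-sided estimate (exact up to bounded overlap), so that $\liminf_{n}\beta_{\nu,n}$ and $\limsup_{n}\beta_{\nu,n}$ coincide on $(0,1)$. Everything after that is elementary convex analysis together with a direct appeal to \prettyref{thm:LqRegularImpliesRegular}.
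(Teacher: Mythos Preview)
Your proposal is correct and follows exactly the route of the paper: quote \cite[Corollary 3.10]{MR3201323} for the existence of $\beta_{\nu}$ as a limit on $(0,1)$ together with the formula $\beta_{\nu}=\max\{\beta_{\mu},\varrho_{\nu}\}$, then feed this into \prettyref{thm:LqRegularImpliesRegular} via condition \prettyref{enu:-LqRegularity1}. You have simply spelled out more of the bookkeeping (the case $q_{r}=0$, why $q_{r}<1$, and why $\beta_{\nu}(q_{r})=rq_{r}$) that the paper leaves implicit.
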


In this example the $L^{q}$-spectrum is not necessarily differentiable
in $q_{r}$. Nevertheless, part \prettyref{enu:-LqRegularity1} of
the $L^{q}$-regularity for $r$ is still valid for $\beta_{\nu}$
under the assumptions of \prettyref{cor:InhomIFS}. Note that the
assumption on $\mu$ guarantees that the quantization dimension of
$\mu$ of order $r>0$ exists.
\begin{example}
\label{exa:ZhuiSelfSimilar} In our final example we consider an inhomogeneous
self-similar measure\emph{ $\nu$} as above, where instead of the
iOSC we assume that $\mu$ itself is a self-similar measure with respect
to the same IFS $\left(f_{i}\right)_{i=1}^{N}$ and probability vector
$\left(t_{i}\right)\in\left[0,1\right]^{N}$. We will demonstrate
how our result can be used to find the $L^{q}$-spectrum on $\left(0,1\right)$
knowing the quantization dimension. For fixed $r>0$ let $\varepsilon_{1,r}$
and $\varepsilon_{2,r}$ be implicitly given by 
\[
\sum_{i=1}^{N}\left(t_{i}s_{i}^{r}\right)^{\frac{\varepsilon_{1,r}}{\varepsilon_{1,r}+r}}=1\quad\text{and}\quad\sum_{i=1}^{N}\left(p_{i}s_{i}^{r}\right)^{\frac{\varepsilon_{2,r}}{\varepsilon_{2,r}+r}}=1.
\]
Then, by \cite[Theorem 1.2]{MR3454048}, we have $D_{r}(\text{\ensuremath{\nu)=\max\left\{ \varepsilon_{1,r},\varepsilon_{2,r}\right\} }}>0$.
Now, for $r=\beta_{\nu}(q)/q$ with $q\in(0,1)$, we have by \prettyref{thm:Main}
\[
q=q_{r}=\overline{D}_{r}\left(\nu\right)/\left(\overline{D}_{r}\left(\nu\right)+r\right).
\]
Without of loss of generality we assume $\max\left\{ \varepsilon_{1,r},\varepsilon_{2,r}\right\} =\varepsilon_{1,r}$.
Then, $q=\varepsilon_{1,r}/\left(r+\varepsilon_{1,r}\right)$ and

\[
1=\sum_{i=1}^{N}\left(t_{i}s_{i}^{r}\right)^{\frac{\varepsilon_{1,\beta_{\nu}(q)/q}}{\varepsilon_{1,\beta_{\nu}(q)/q}+r}}=\sum_{i=1}^{N}\left(t_{i}s_{i}^{\frac{\beta_{\nu}(q)}{q}}\right)^{q}=\sum_{i=1}^{N}t_{i}^{q}s_{i}^{\beta_{\nu}(q)}.
\]
Since $x\mapsto x/(x+r)$ is increasing on $\mathbb{R}_{\geq0}$,
it follows that $\beta_{\nu}(q)=\max\left\{ \rho_{1,q},\rho_{2,q}\right\} $
where $\rho_{1,q},\rho_{2,q}$ denote the unique solutions of

\[
\sum_{i=1}^{N}t_{i}^{q}s_{i}^{\rho_{1,q}}=1\quad\text{and}\quad\sum_{i=1}^{N}p_{i}^{q}s_{i}^{\rho_{2,q}}=1.
\]
This shows that also for this special case of inhomogeneous self-similar
measure the $L^{q}$-spectrum is given by the same formula as under
iOSC provided in \prettyref{cor:InhomIFS}.
\end{example}

\section{Partition functions , coarse multifractal formalism, and $L^{q}$-spectra
\label{sec:Partition-functions}}

In the sequel, we will refer to some results of \cite{KN2022} and
for this reason we recall the newly introduced notion of partition
functions with respect to a non-negative, monotone set function $\J$
defined on the dyadic cubes $\mathcal{D}$. The full generality of
this approach is not needed, and for the purposes of this paper we
will restrict ourselves throughout to the particular choice
\[
\J\coloneqq\J_{\nu,r}:Q\mapsto\nu\left(Q\right)\left(\Lambda\left(Q\right)\right)^{r/d},\;\text{for }r\geq0.
\]
The\emph{ partition function} with respect to $\J$ is then given,
for $q\geq0$, by 
\[
\GL_{\J}(q)\coloneqq\limsup_{n\rightarrow\infty}\GL_{\J,n}(q),\:\text{with}\:\GL_{\J,n}(q)\coloneqq\frac{\log\left(\sum_{C\in\mathcal{D}_{n}}\J(C)^{q}\right)}{\log(2^{n})},
\]
where we use the convention $0^{0}=0$, that is for $q=0$ we neglect
the summands with $\J\left(Q\right)=0$ in the definition of $\GL_{\J}$.
We define the critical value
\[
q_{\J}\coloneqq\inf\left\{ q\geq0:\GL_{\J}\left(q\right)<0\right\} 
\]
 which is connected to the $L^{q}$-spectrum and our notation from
\prettyref{sec:Introduction-and-background} via
\[
\GL_{\J_{\nu,r}}\left(q\right)=\beta_{\nu}\left(q\right)-rq\,\:\text{and }\;q_{r}=q_{\J_{\nu.r}}.
\]
In order to set up the\emph{ }optimize coarse multifractal dimension
with respect to $\J$ in this context, we introduce the following
terminology to simplify the notation and also to make the correspondence
with the relevant results in \autocite{KN2022b} explicit. For $n\in\N$
and $\alpha>0$, we set
\[
N_{\alpha,\J_{\nu,r}}\left(n\right)\coloneqq\left\{ C\in\mathcal{D}_{n}:\J_{\nu,r}(C)\geq2^{-\alpha n}\right\} =N_{\nu,\alpha,r}\left(n\right)
\]
and to express the dependence on $\mathfrak{J}_{\nu,r}$ for all relevant quantities we also use the notation $\mathcal{N}_{\alpha,\J_{\nu,r}}\left(n\right)\coloneqq\mathcal{N}_{\nu,\alpha,r}\left(n\right)$,
$\overline{F}_{\J_{\nu,r}}\left(\alpha\right)\coloneqq\overline{F}_{\nu,r}\left(\alpha\right)$,
$\underline{F}_{\J_{\nu,r}}\left(\alpha\right)\coloneqq\underline{F}_{\nu,r}\left(\alpha\right)$,
and write $\overline{F}_{\J_{\nu,r}}\coloneqq\overline{F}_{\nu,r}$
and $\overline{F}_{\J_{\nu,r}}\coloneqq\overline{F}_{\nu,r}$ to denote
the \emph{optimize coarse multifractal dimension} with respect to
$\J_{\nu,r}$. Similarly, for $x>1/\J_{\nu,r}(\Q)$, we write $\mathcal{M}_{\J_{\nu,r}}\left(x\right)\coloneqq\mathcal{M}_{\nu,r}\left(x\right)$
as given in \prettyref{eq:M-nu(r)} and for the corresponding \emph{partition
entropies} $\overline{h}_{\J_{\nu,r}}\coloneqq\overline{h}_{\nu,r}$
and $\underline{h}_{\J_{\nu,r}}\coloneqq\underline{h}_{\nu,r}$\emph{,
respectively.}

We end this section with some important facts about the \emph{$L^{q}$-spectrum
}of $\nu$:
\begin{itemize}
\item $\beta_{\nu}\left(1\right)=0$.
\item $\beta_{\nu}\left(0\right)=\overline{\dim}_{M}\left(\nu\right)$
\item $\dim_{\infty}(\nu)\leq d.$
\item For $q\geq0$, we have $-qd\leq\beta_{\nu}(q)$.
\item If $\nu$ is absolutely continuous, then $\beta_{\nu}\left(q\right)=d\left(1-q\right)$,
for all $q\in\left[0,1\right]$.
\end{itemize}

\section{Optimal partitions and partition entropy \label{sec:OptimalPartitions}}

\subsection{Upper bounds}

We make use of an observation from \autocite{KN2022b} which is valid
for set functions $\J:\mathcal{D}\to\R_{\geq0}$ on the dyadic cubes
$\mathcal{D}$, which are monotone, uniformly vanishing and locally
non-vanishing with $\J\left(\Q\right)>0$ and such that $\liminf_{n}\GL_{\J,n}\left(q\right)\in\R$
for some $q>0$. Here, \emph{uniformly vanishing} means $\lim_{k\to\text{\ensuremath{\infty}}}\sup_{Q\in\bigcup_{n\geq k}\mathcal{D}_{n}}\J\left(Q\right)=0$
and \emph{locally non-vanishing} means $\J\left(Q\right)>0$ implies
that there exists $Q'\in\left\{ R\in\mathcal{D}:R\subsetneq Q\right\} $
with $\J\left(Q'\right)>0$. It is important to note that all these
conditions on the set function are fulfilled for our particular choice
$\J=\J_{\nu,r}$.

Now we are in the position to state the upper bounds in terms of optimal
partitions denoted by $P_{t}$.
\begin{prop}[{\parencite[Prop. 4.1]{KN2022b}}]
\label{prop:GeneralResultOnPartitionFunction} For $0<t<\J(\Q)$,
we have that
\[
P_{t}\coloneqq\left\{ Q\in\mathcal{D}:\J\left(Q\right)<t\,\&\,\exists Q'\in\mathcal{D}_{\left|\log_{2}\Lambda\left(Q\right)\right|/d-1}^{N}:Q'\supset Q\,\&\J(Q')\geq t\right\} 
\]
is a finite partition of dyadic cubes of $\Q$, and we have
\[
\overline{F}_{\J}\leq\overline{h}_{\J}\leq\limsup_{t\downarrow0}\frac{\log\left(\card\left(P_{t}\right)\right)}{-\log(t)}\leq q_{\J},
\]
and 
\[
\underline{F}_{\J}\leq\underline{h}_{\J}\leq\liminf_{t\downarrow0}\frac{\log\left(\card\left(P_{t}\right)\right)}{-\log(t)}.
\]
\end{prop}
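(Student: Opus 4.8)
The plan is to verify three things in turn: (i) that $P_{t}$ is a finite dyadic partition of $\Q$; (ii) the chain $\overline{h}_{\J}\le\limsup_{t\downarrow0}\log\card(P_{t})/(-\log t)\le q_{\J}$ together with its lower analogue; and (iii) the comparisons $\overline{F}_{\J}\le\overline{h}_{\J}$ and $\underline{F}_{\J}\le\underline{h}_{\J}$. Only the structural hypotheses on $\J$ (monotone, uniformly vanishing, locally non-vanishing, $\J(\Q)>0$) and the fact that dyadic cubes of positive $\J$ lie inside $\Q$ will be used. For (i): fix $0<t<\J(\Q)$; by uniform vanishing there is a level $n_{t}$ with $\J(Q)<t$ for all $Q\in\mathcal{D}_{n}$, $n\ge n_{t}$, so the parent of any cube in $P_{t}$ sits at a level $<n_{t}$, and since there are only finitely many dyadic cubes of positive $\J$ above that level, each with at most $2^{d}$ children, $P_{t}$ is finite. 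Given $x\in\Q$, walk down the nested dyadic cubes $\Q=Q_{0}\supset Q_{1}\supset\cdots$ containing $x$: since $\J(Q_{0})=\J(\Q)>t$ and $\J(Q_{m})\to0$, there is a first $m\ge1$ with $\J(Q_{m})<t$, and then $\J(Q_{m-1})\ge t$, so $x\in Q_{m}\in P_{t}$; and two members $Q_{1}\subsetneq Q_{2}$ of $P_{t}$ cannot coexist, because the parent of $Q_{1}$ would then have $\J\ge t$ and, being contained in $Q_{2}$, also $\J\le\J(Q_{2})<t$ by monotonicity.

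For (ii), the left-hand inequality is immediate: for $x>1/\J(\Q)$ the partition $P_{1/x}$ satisfies $\max_{Q\in P_{1/x}}\J(Q)<1/x$, so it is admissible in the infimum defining $\mathcal{M}_{\J}(x)$ and $\mathcal{M}_{\J}(x)\le\card(P_{1/x})$; dividing by $\log x=-\log(1/x)$ and taking $\limsup$, resp.\ $\liminf$, as $x\to\infty$ bounds $\overline{h}_{\J}$, resp.\ $\underline{h}_{\J}$, by the corresponding quantity in $t\downarrow0$. For the right-hand inequality we may assume $q_{\J}<\infty$ and fix $q>q_{\J}$, so that $\GL_{\J}(q)<0$ (as $\GL_{\J}$ is non-increasing); then $\sum_{C\in\mathcal{D}_{n}}\J(C)^{q}$ decays exponentially in $n$, so, together with the remaining finitely many terms (each a finite sum), $S_{q}:=\sum_{n}\sum_{C\in\mathcal{D}_{n}}\J(C)^{q}<\infty$. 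Every cube of $P_{t}$ is among the at most $2^{d}$ children of some $Q'\in\mathcal{D}$ with $\J(Q')\ge t$, hence
\[
\card(P_{t})\le2^{d}\card\{Q'\in\mathcal{D}:\J(Q')\ge t\}\le2^{d}t^{-q}\sum_{Q'\in\mathcal{D}}\J(Q')^{q}=2^{d}t^{-q}S_{q},
\]
so that $\log\card(P_{t})/(-\log t)\le q+\log(2^{d}S_{q})/(-\log t)\to q$ as $t\downarrow0$; letting $q\downarrow q_{\J}$ closes the chain.

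For (iii), I would prove $\mathcal{M}_{\J}(2^{\alpha n})\ge\mathcal{N}_{\alpha,\J}(n)$ for all $\alpha>0$ and $n\in\N$. Let $P$ be any admissible dyadic partition of $\Q$ with $\max_{Q\in P}\J(Q)<2^{-\alpha n}$. A cube $C\in N_{\alpha,\J}(n)$ cannot be contained in a member of $P$, since monotonicity would force that member to have $\J\ge\J(C)\ge2^{-\alpha n}$; hence $C$ is a disjoint union of cubes of $P$, and as $\nu(C)>0$ at least one of them has positive $\nu$-mass, so positive $\J$, so belongs to $P$; distinct $C$'s select distinct cubes of $P$, whence $\card(P)\ge\mathcal{N}_{\alpha,\J}(n)$, and taking the infimum over $P$ gives the bound. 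Combining it with the monotonicity of $x\mapsto\mathcal{M}_{\J}(x)$ and the observation that every large $x$ lies in some interval $[2^{\alpha n},2^{\alpha(n+1)})$ yields $\overline{h}_{\J}\ge\overline{F}_{\J}(\alpha)/\alpha$ and $\underline{h}_{\J}\ge\underline{F}_{\J}(\alpha)/\alpha$; a supremum over $\alpha>0$ then finishes the proof.

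The main obstacle is the estimate $\limsup_{t\downarrow0}\log\card(P_{t})/(-\log t)\le q_{\J}$ in step (ii): it forces one to count $P_{t}$ through its parent cubes and to control the full multiscale series $\sum_{n}\sum_{C\in\mathcal{D}_{n}}\J(C)^{q}$, whose finiteness for $q>q_{\J}$ is precisely where the critical exponent $q_{\J}$ enters. Everything else---the partition and disjointness statements in (i) and the comparison with the coarse counting function in (iii)---is routine once the structural assumptions on $\J$ are available.
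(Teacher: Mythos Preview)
The paper does not actually prove this proposition: it is quoted verbatim as \parencite[Prop.~4.1]{KN2022b} and used as a black box, so there is no in-paper argument to compare with. Your write-up therefore supplies a proof that the authors deliberately outsourced.

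Your argument is correct in all three parts. A couple of minor remarks. In (i), when you exclude nested members $Q_{1}\subsetneq Q_{2}$, the case where the parent of $Q_{1}$ \emph{equals} $Q_{2}$ should be mentioned explicitly (it gives $\J(Q_{2})\ge t$ directly, contradicting $Q_{2}\in P_{t}$); your phrasing ``being contained in $Q_{2}$'' covers it but a reader might hesitate. In (ii) you implicitly use that $\GL_{\J}$ is non-increasing so that $q>q_{\J}$ forces $\GL_{\J}(q)<0$; for $\J=\J_{\nu,r}$ this holds because $\J(C)\le1$, so the moment sums are monotone in $q$---worth saying once. The finiteness of each level-$n$ sum, needed for $S_{q}<\infty$, follows from $\supp\nu\subset\Q$, so only finitely many cubes of $\mathcal{D}_{n}$ carry mass; you allude to this but could state it. In (iii), your injection $C\mapsto Q\in P$ with $Q\subset C$ uses that distinct cubes of $\mathcal{D}_{n}$ are disjoint, hence select disjoint (so distinct) members of $P$; again correct, and your passage from $\mathcal{M}_{\J}(2^{\alpha n})\ge\mathcal{N}_{\alpha,\J}(n)$ to the entropy inequalities via monotonicity in $x$ and sandwiching $x\in[2^{\alpha n},2^{\alpha(n+1)})$ is the standard route.

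In short: your proof is sound and would serve as a self-contained substitute for the external citation.
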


\subsection{The dual problem\label{subsec:DualProblem}}

In this section we recall the close connection to the shrinking rate
of the following quantity
\[
\gamma_{\J,n}\coloneqq\inf_{P\in\Pi_{\J},\card(P)\leq n}\max_{Q\in P}\J\left(Q\right)
\]
as elaborated in \cite{KN2022b}. As before, let $\J\coloneqq\J_{\nu,r}$,
$r>0$. Then \parencite[Cor. 4.10]{KN2022b} gives in this situation
the following.
\begin{lem}
\label{lem:BorzovBirmanSolomyak}For $r>0$, we have 
\[
\gamma_{\J_{\nu,r},n}=O\left(n^{-(1+r/d)}\right)\:\text{and}\:\mathcal{M}_{\J_{\nu,r}}(x)=O\left(x^{d/(d+r)}\right).
\]
If additionally $\nu$ is singular, then
\[
\gamma_{\J_{\nu,r},n}=o\left(n^{-(1+r/d)}\right)\:\text{and}\:\mathcal{M}_{\J_{\nu,r}}(x)=o\left(x^{d/(d+r)}\right).
\]
We obtain the following estimate for the upper exponent of divergence
of $\gamma_{\J,n}$ given by
\[
\overline{\alpha}_{\J_{\nu,r}}\coloneqq\limsup_{n\rightarrow\infty}\frac{\log\left(\gamma_{\J_{\nu,r},n}\right)}{\log(n)}\:\text{and}\:\underline{\alpha}_{\J_{\nu,r}}\coloneqq\liminf_{n\rightarrow\infty}\frac{\log\left(\gamma_{\J_{\nu,r},n}\right)}{\log(n)}.
\]
\end{lem}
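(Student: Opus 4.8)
The plan is to derive both pairs of estimates from \parencite[Cor.~4.10]{KN2022b} applied to the set function $\J=\J_{\nu,r}$. The structural hypotheses needed there hold for $\J_{\nu,r}$ — it is monotone (a product of the monotone set functions $\nu$ and $\Lambda^{r/d}$), uniformly vanishing (since $\sup_{Q\in\mathcal{D}_{n}}\Lambda(Q)^{r/d}=2^{-nr}\to0$ while $\nu\le1$), locally non-vanishing (if $\nu(Q)>0$, then $\nu|_{Q}$ is a nonzero measure and some proper dyadic subcube of $Q$ carries positive mass), satisfies $\J_{\nu,r}(\Q)=1>0$, and has $\liminf_{n}\GL_{\J_{\nu,r},n}(1)=-r\in\R$ — and all of this was already recorded in \prettyref{sec:Partition-functions}. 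The second and only substantive point of the reduction is the matching of exponents: in $\J_{\nu,r}(Q)=\nu(Q)\Lambda(Q)^{r/d}$ the volume term contributes the spatially homogeneous factor $2^{-nr}$ on $\mathcal{D}_{n}$, and this is exactly the input that specializes the general bound of \parencite[Cor.~4.10]{KN2022b} to the exponent $1+r/d$ for $\gamma_{\J_{\nu,r},n}$ and to $d/(d+r)=(1+r/d)^{-1}$ for $\mathcal{M}_{\J_{\nu,r}}$; the two displayed statements on each line are equivalent because $n\mapsto\gamma_{\J,n}$ and $x\mapsto\mathcal{M}_{\J}(x)$ are mutually inverse up to obvious boundary effects.

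For orientation I would record the mechanism underlying the $O$-bounds, which is essentially \prettyref{prop:GeneralResultOnPartitionFunction}: for $t\in(0,\J_{\nu,r}(\Q))$ one takes the adaptive partition $P_{t}$, obtained by repeatedly subdividing any dyadic cube with $\J_{\nu,r}(Q)\ge t$ into its $2^{d}$ children until every surviving cube satisfies $\J_{\nu,r}(Q)<t$; this terminates because $\J_{\nu,r}$ is uniformly vanishing. Up to a factor $2^{d}$, $\card(P_{t})$ is bounded by the number of dyadic cubes ever subdivided, i.e. those with $\J_{\nu,r}(Q)\ge t$. A cube $Q\in\mathcal{D}_{n}$ satisfies $\J_{\nu,r}(Q)=\nu(Q)2^{-nr}\ge t$ iff $\nu(Q)\ge t\,2^{nr}$, which forces $n\le(-\log_{2}t)/r$ and, since the cubes of $\mathcal{D}_{n}$ meeting $\supp\nu$ are disjoint with total mass $1$, bounds their number at level $n$ by $\min\{2^{nd},\,t^{-1}2^{-nr}\}$. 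Splitting the sum over $n$ at the crossover level $n_{\ast}:=(-\log_{2}t)/(d+r)$, both regimes contribute $O(t^{-d/(d+r)})$, so $\card(P_{t})=O(t^{-d/(d+r)})$; inverting this relation yields $\gamma_{\J_{\nu,r},n}=O(n^{-(1+r/d)})$ and $\mathcal{M}_{\J_{\nu,r}}(x)=O(x^{d/(d+r)})$, and in particular $\overline{\alpha}_{\J_{\nu,r}}\le-(1+r/d)$.

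The only genuinely delicate part — and the step I expect to be the main obstacle to a self-contained treatment — is the improvement from $O$ to $o$ when $\nu$ is singular with respect to Lebesgue measure; this is precisely where \parencite[Cor.~4.10]{KN2022b} does the work, and I do not plan to reprove it. Morally it is the partition-theoretic counterpart of the classical fact quoted in \prettyref{sec:Introduction-and-background} that singularity upgrades $\er_{n,r}(\nu)=O(n^{-1/d})$ to $\er_{n,r}(\nu)=o(n^{-1/d})$: in the coarse-scale regime $n\lesssim n_{\ast}$ the crude bound $2^{nd}$ on the number of subdivided level-$n$ cubes is replaced by $2^{nd}\,\Lambda\big(\bigcup\{Q\in\mathcal{D}_{n}:\J_{\nu,r}(Q)\ge t\}\big)$, and concentrating $\nu$ on a Lebesgue-null set (enclosing a set of full $\nu$-measure in an open set of small volume) forces an extra vanishing factor into this quantity as $t\downarrow0$, which turns every $O$ above into $o$. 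With this, the lemma follows by invoking \parencite[Cor.~4.10]{KN2022b}.
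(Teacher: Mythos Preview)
Your proposal is correct and follows the same approach as the paper: both derive the lemma directly from \parencite[Cor.~4.10]{KN2022b}, with the paper simply stating that this corollary ``gives in this situation the following'' after noting in \prettyref{sec:Partition-functions} that the structural hypotheses hold for $\J_{\nu,r}$. Your additional verification of those hypotheses and the sketch of the adaptive-partition mechanism behind the $O$-bounds are useful elaborations but do not deviate from the paper's route.
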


\begin{prop}[{\parencite[Prop. 4.11]{KN2022b}}]
\label{prop:UpperBoundJ_a} For $r>0$, we have
\[
-\frac{1}{\overline{h}_{\J_{\nu,r}}}=\overline{\alpha}_{\J_{\nu,r}}\leq-\frac{1}{q_{\J_{\nu,r}}}\leq-\frac{\overline{\dim}_{M}(\nu)+r}{\overline{\dim}_{M}(\nu)}\leq-(1+r/d)\quad\text{ and }\quad-\frac{1}{\underline{h}_{\J_{\nu,r}}}=\underline{\alpha}_{\J_{\nu,r}}.
\]
\end{prop}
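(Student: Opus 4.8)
The plan is to reduce the statement to the asymptotics of the single non-decreasing function $x\mapsto\mathcal{M}_{\J}(x)$, where we abbreviate $\J\coloneqq\J_{\nu,r}$; the point is that $(\gamma_{\J,n})_{n}$ and $\mathcal{M}_{\J}$ are, up to their jumps, mutual generalized inverses. On top of this duality I would only use the inequality $\overline{h}_{\J}\le q_{\J}$ recorded in \prettyref{prop:GeneralResultOnPartitionFunction} together with one elementary bound for the $L^{q}$-spectrum. Observe first that $\Pi_{\J}=\Pi_{\nu}$, since $\Lambda(Q)>0$ for every dyadic cube $Q$, so both $\gamma_{\J,n}$ and $\mathcal{M}_{\J}$ are infima over the same class of partitions.

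The key observation is that, for all $n\in\N$ and $x>1/\J(\Q)$,
\[
\mathcal{M}_{\J}(x)\le n\iff\gamma_{\J,n}<1/x,
\]
because each side merely asserts the existence of some $P\in\Pi_{\J}$ with $\card(P)\le n$ and $\max_{Q\in P}\J(Q)<1/x$ (an infimum over positive integers is $\le n$ exactly when some admissible $P$ has $\card(P)\le n$). Writing $g(n)\coloneqq\gamma_{\J,n}$ and $m(x)\coloneqq\mathcal{M}_{\J}(x)$, this forces $m(x)=n+1$ on each non-empty interval $\big[1/g(n),1/g(n+1)\big)$; on such an interval $\log m(x)/\log x$ is monotone in $\log x$, so the $\limsup$, resp. $\liminf$, of $\log m(x)/\log x$ as $x\to\infty$ equals that of its values at the interval endpoints, which (using $\log(n+1)\sim\log n$) gives
\[
\overline{h}_{\J}=\limsup_{n\to\infty}\frac{\log n}{-\log\gamma_{\J,n}},\qquad\underline{h}_{\J}=\liminf_{n\to\infty}\frac{\log n}{-\log\gamma_{\J,n}}.
\]
Since $\gamma_{\J,n}\to0$, the quotients $\log\gamma_{\J,n}/\log n$ lie in $(-\infty,0)$ for large $n$, and $t\mapsto-1/t$ is continuous and increasing there; as $\limsup$ and $\liminf$ commute with continuous increasing maps, applying $t\mapsto-1/t$ yields $\overline{\alpha}_{\J}=-1/\overline{h}_{\J}$ and $\underline{\alpha}_{\J}=-1/\underline{h}_{\J}$. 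The degenerate case (for instance $\nu$ with finite support, where $\gamma_{\J,n}=0$ eventually and $\mathcal{M}_{\J}$ stays bounded) is absorbed by the conventions $-1/0=-\infty$ and $-1/(-\infty)=0$.

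For the remaining inequalities, \prettyref{prop:GeneralResultOnPartitionFunction} gives $\overline{h}_{\J}\le q_{\J}$, hence $\overline{\alpha}_{\J}=-1/\overline{h}_{\J}\le-1/q_{\J}$ by monotonicity of $t\mapsto-1/t$. To bound $q_{\J}=q_{r}$, put $s\coloneqq\overline{\dim}_{M}(\nu)=\beta_{\nu}(0)$. For $q\in(0,1)$, concavity of $t\mapsto t^{q}$ together with $\sum_{C\in\mathcal{D}_{n}}\nu(C)=1$ gives
\[
\sum_{C\in\mathcal{D}_{n}}\nu(C)^{q}\le\big(\card\{C\in\mathcal{D}_{n}:\nu(C)>0\}\big)^{1-q},
\]
so $\beta_{\nu,n}(q)\le(1-q)\beta_{\nu,n}(0)$ and, taking $\limsup_{n}$, $\beta_{\nu}(q)\le(1-q)s$. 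Hence $\GL_{\J}(q)=\beta_{\nu}(q)-rq\le s-q(s+r)<0$ for every $q>s/(s+r)$, which yields $q_{\J}\le s/(s+r)$. As $s\le d$ and $x\mapsto x/(x+r)$ is increasing, $s/(s+r)\le d/(d+r)$, and therefore
\[
\overline{\alpha}_{\J}\le-\frac{1}{q_{\J}}\le-\frac{s+r}{s}=-\frac{\overline{\dim}_{M}(\nu)+r}{\overline{\dim}_{M}(\nu)}\le-\frac{d+r}{d}=-(1+r/d),
\]
which, combined with the two equalities of the previous paragraph, is exactly the asserted chain.

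The only genuinely delicate part is the second paragraph: one has to track the jumps of $m$ so that $\limsup_{x\to\infty}\log m(x)/\log x$ is correctly matched with the subsequence of indices $n$ at which $\gamma_{\J,n}$ strictly decreases, control the $\log(n+1)/\log n\to1$ correction uniformly in the relevant range, and deal with the boundary value $x\downarrow1/\J(\Q)$ and the degenerate case. The inequality $\overline{h}_{\J}\le q_{\J}$ is already available, the power-mean bound is a one-liner, and the manipulations with $t\mapsto-1/t$ are routine, so I expect no further obstruction.
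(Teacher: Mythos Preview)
Your argument is correct. The duality $\mathcal{M}_{\J}(x)\le n\iff\gamma_{\J,n}<1/x$ is the right pivot, and the passage from $\overline{h}_{\J}$, $\underline{h}_{\J}$ to $\limsup_{n}$, $\liminf_{n}$ of $\log n/(-\log\gamma_{\J,n})$---though it needs a little care with the plateaus of $n\mapsto\gamma_{\J,n}$ and the $\log(n+1)/\log n\to 1$ correction---goes through as you sketch; the sandwich $-\log g(m(x)-1)\le\log x<-\log g(m(x))$ obtained from $g(m(x))<1/x\le g(m(x)-1)$ is perhaps the cleanest way to pin down both the $\limsup$ and the $\liminf$ simultaneously. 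The reciprocation via the increasing continuous map $t\mapsto-1/t$ and the H\"older bound $\sum_{C}\nu(C)^{q}\le\big(\card\{C:\nu(C)>0\}\big)^{1-q}$ then finish the chain of inequalities.

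Note, however, that the present paper does not give its own proof of this proposition: it is quoted as \parencite[Prop.~4.11]{KN2022b} and used as a black box. So there is nothing to compare against here; your write-up simply supplies a self-contained justification where the paper defers to the cited source. The route you take---generalized-inverse identity between $\mathcal{M}_{\J}$ and $(\gamma_{\J,n})_{n}$, then \prettyref{prop:GeneralResultOnPartitionFunction} plus the elementary convexity bound on $\beta_{\nu}$---is the natural one and is presumably close to what the reference does.
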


\subsection{Lower bounds\label{sec:Coarse-multifractal-analysis}}

We have seen so far that 
\[
\overline{F}_{r}=\overline{F}_{\J_{\nu,r}}\leq q_{\J_{\nu,r}}=q_{r}.
\]
For the lower bounds we use ideas from the coarse multifractal analysis
as developed in \autocite{KN2022,KN2022b} for the particular choice
$\J=\J_{\nu,r}$. Since the conditions stated \autocite[Prop.  5.7 and 5.8]{KN2022b}
are fulfilled for $\J_{\nu,r}$, we obtain the following two equalities.
\begin{prop}
\label{prop:LowerBoundUpper/lowerSpecDim} For $r>0$, we always have
$\overline{F}_{r}=q_{r}$ and if $\nu$ is \emph{$L^{q}$-regular
for $r$, then} $\underline{F}_{r}=q_{r}.$
\end{prop}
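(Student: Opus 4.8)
The plan is to deduce both identities from the general coarse multifractal results \parencite[Prop. 5.7 and 5.8]{KN2022b}, so that the task reduces to checking that the set function $\J_{\nu,r}\colon Q\mapsto\nu(Q)\Lambda(Q)^{r/d}$ satisfies their hypotheses. As already noted in \prettyref{sec:OptimalPartitions}, $\J_{\nu,r}$ is monotone, uniformly vanishing and locally non-vanishing with $\J_{\nu,r}(\Q)>0$. Moreover, the associated partition function $\GL_{\J_{\nu,r}}(q)=\beta_{\nu}(q)-rq$ is finite for every $q\ge0$ (using $-qd\le\beta_{\nu}(q)\le\overline{\dim}_{M}(\nu)$) and, being the decreasing limit in $k$ of the functions $q\mapsto\sup_{n\ge k}\GL_{\J_{\nu,r},n}(q)$ — each a supremum of the convex maps $q\mapsto\GL_{\J_{\nu,r},n}(q)$ — it is convex, hence continuous, on $(0,\infty)$. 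These are precisely the structural inputs under which the coarse multifractal formalism of \cite{KN2022b} is set up.

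For the first equality one inequality is already available: \prettyref{prop:GeneralResultOnPartitionFunction} gives $\overline{F}_{r}=\overline{F}_{\J_{\nu,r}}\le q_{\J_{\nu,r}}=q_{r}$. For the reverse inequality the key step is to establish the Legendre-type identity
\[
\GL_{\J_{\nu,r}}(q)=\sup_{\alpha>0}\bigl(\overline{F}_{\nu,r}(\alpha)-\alpha q\bigr),\qquad q>0,
\]
by sandwiching $\sum_{C\in\mathcal{D}_{n}}\J_{\nu,r}(C)^{q}$ between suitable dyadic sums over the level sets $N_{\nu,\alpha,r}(n)$, after discretising the relevant (bounded) range of exponents $\alpha$ and taking care of the cubes with $\nu(C)=0$ and of the $\log^{+}$ truncation. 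If $q_{r}=0$ there is nothing to prove, as then $0\le\overline{F}_{r}\le q_{r}=0$; if $q_{r}>0$, continuity forces $\GL_{\J_{\nu,r}}(q_{r})=0$, so the supremum in the displayed identity at $q=q_{r}$ equals $0$ and is approached along a sequence of exponents, i.e.\ there are $\alpha$ with $\overline{F}_{\nu,r}(\alpha)/\alpha$ arbitrarily close to $q_{r}$, whence $\overline{F}_{r}\ge q_{r}$. This is \parencite[Prop. 5.7]{KN2022b}.

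For the second equality, note that $\underline{F}_{\nu,r}(\alpha)\le\overline{F}_{\nu,r}(\alpha)$ for each $\alpha$, so $\underline{F}_{r}\le\overline{F}_{r}=q_{r}$ holds automatically and only $\underline{F}_{r}\ge q_{r}$ needs proof. Repeating the argument above with $\liminf_{n}$ replacing $\limsup_{n}$ yields, in general, only the one-sided bound $\liminf_{n}\GL_{\J_{\nu,r},n}(q)\ge\sup_{\alpha>0}\bigl(\underline{F}_{\nu,r}(\alpha)-\alpha q\bigr)$, since the limit inferior over $n$ and the supremum over $\alpha$ need not commute. To upgrade this to an equality in a neighbourhood of $q_{r}$ one needs $\liminf_{n}\GL_{\J_{\nu,r},n}(q)=\GL_{\J_{\nu,r}}(q)$ either on a left neighbourhood of $q_{r}$ or at $q_{r}$ itself together with differentiability of $\GL_{\J_{\nu,r}}$ there; since $\GL_{\J_{\nu,r},n}(q)=\beta_{\nu,n}(q)-rq$, this is exactly $L^{q}$-regularity of $\nu$ for $r$ in either of its two forms. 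Under that hypothesis \parencite[Prop. 5.8]{KN2022b} applies and gives $\underline{F}_{r}=q_{r}$.

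I expect the main obstacle to be the bookkeeping behind the Legendre identity — controlling which exponents $\alpha$ can occur at scale $n$ (which is where the uniformly vanishing property enters), absorbing the $\log^{+}$ truncation and the zero-mass cubes, and, for the lower spectrum, locating precisely where the $\liminf$/$\sup$ interchange fails and how $L^{q}$-regularity remedies it. Since all of this is carried out in \cite{KN2022b} for set functions satisfying the conditions verified in the first paragraph, once those are in place the proposition follows from the cited propositions.
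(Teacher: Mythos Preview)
Your proposal is correct and follows the same approach as the paper: both reduce the proposition to \parencite[Prop.~5.7 and 5.8]{KN2022b} after verifying that $\J_{\nu,r}$ meets the required structural hypotheses. In fact you give considerably more detail than the paper does here --- the paper simply asserts that the cited conditions hold for $\J_{\nu,r}$ and invokes the two propositions without further comment, whereas you sketch the Legendre-type identity and the role of $L^{q}$-regularity that underlie them.
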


\section{\label{sec:Upper-bounds-1} Application to the Quantization Dimension}

In this section we give a proof for the upper and lower bound of the
quantization dimension of order $r>0$ as stated in \prettyref{thm:Main}.

\subsection{Upper bounds\label{subsec:Upper-bound-for-spectral-dimension-1}}

We start by proving the upper bound for the upper quantization dimension.
\begin{prop}
\label{prop:UpperBound}For all $n\in\N$, we have 
\[
\er_{n,r}(\nu)^{r}\leq\sqrt{d}n\gamma_{\J_{\nu,r},n}.
\]
In particular, 
\[
\overline{D}_{r}\left(\nu\right)\leq\frac{r\overline{h}_{\J_{\nu,r}}}{1-\overline{h}_{\J_{\nu,r}}}=\frac{rq_{r}}{1-q_{r}}=\overline{\mathfrak{R}}_{\nu}\left(q_{r}\right)\le\overline{\dim}_{M}(\nu),
\]
and 
\[
\underline{D}_{r}\left(\nu\right)\leq\frac{r\underline{h}_{\J_{\nu,r}}}{1-\underline{h}_{\J_{\nu,r}}}.
\]
\end{prop}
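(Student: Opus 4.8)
The plan is to connect the quantization error directly to the dual quantity $\gamma_{\J_{\nu,r},n}$ via a cube-center quantizer, and then to invoke the already-established identities for the partition entropy. First I would fix $n\in\N$ and take a $\nu$-partition $P\in\Pi_{\J_{\nu,r}}$ (equivalently $\Pi_\nu$) with $\card(P)\le n$ that nearly realizes the infimum defining $\gamma_{\J_{\nu,r},n}$, i.e.\ $\max_{Q\in P}\nu(Q)\Lambda(Q)^{r/d}$ is within an arbitrarily small factor of $\gamma_{\J_{\nu,r},n}$. From $P$ I build the quantizer $\alpha\coloneqq\{\text{center of }Q:Q\in P\}\in\mathcal{A}_n$. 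Since $P$ covers $\supp(\nu)$ up to a $\nu$-null set and each $Q\in P$ is a dyadic cube of side length $\Lambda(Q)^{1/d}$, any point $x\in Q$ satisfies $d(x,\alpha)\le \tfrac{\sqrt d}{2}\Lambda(Q)^{1/d}$, hence $d(x,\alpha)^r\le (\sqrt d/2)^r \Lambda(Q)^{r/d}\le \sqrt d\,\Lambda(Q)^{r/d}$ for $r>0$ (absorbing the $2^{-r}$ into the crude constant — one can keep $(\sqrt d/2)^r$ if a sharper constant is desired, but $\sqrt d$ suffices).

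The second step is the integration estimate. Partitioning the integral over the cubes of $P$,
\[
\er_{n,r}(\nu)^r \le \int d(x,\alpha)^r\,\d\nu(x) = \sum_{Q\in P}\int_Q d(x,\alpha)^r\,\d\nu \le \sqrt d\sum_{Q\in P}\nu(Q)\Lambda(Q)^{r/d}.
\]
Bounding each summand by the maximum and using $\card(P)\le n$ gives $\er_{n,r}(\nu)^r\le \sqrt d\, n \max_{Q\in P}\nu(Q)\Lambda(Q)^{r/d}$, and taking the infimum over admissible $P$ yields $\er_{n,r}(\nu)^r\le \sqrt d\, n\,\gamma_{\J_{\nu,r},n}$, which is the first displayed claim.

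For the dimension bounds, I would take logarithms: $-\log\er_{n,r}(\nu)\ge \tfrac1r\bigl(-\log(\sqrt d\, n) - \log\gamma_{\J_{\nu,r},n}\bigr)$. Dividing $\log n$ by this and passing to $\limsup$ (resp.\ $\liminf$), the $\log\sqrt d$ term is negligible and one is left with a quantity governed by $\overline{\alpha}_{\J_{\nu,r}}=\limsup_n \log\gamma_{\J_{\nu,r},n}/\log n$ (resp.\ $\underline\alpha$). A short computation — using $\overline\alpha_{\J_{\nu,r}}=-1/\overline h_{\J_{\nu,r}}$ from \prettyref{prop:UpperBoundJ_a}, and noting $\overline h_{\J_{\nu,r}}\le 1$ there so the algebra is valid — converts $\limsup_n \log n/(-\log\er_{n,r})$ into $r\overline h_{\J_{\nu,r}}/(1-\overline h_{\J_{\nu,r}})$; the chain $\overline h_{\J_{\nu,r}}=q_r$ and $rq_r/(1-q_r)=\overline{\mathfrak R}_\nu(q_r)$ follows from \prettyref{eq:identities} and the definition of the Rényi dimension, while $\overline{\mathfrak R}_\nu(q_r)\le\overline{\dim}_M(\nu)$ comes from the bound $q_r\le q_{\J_{\nu,r}}\le \overline{\dim}_M(\nu)/(\overline{\dim}_M(\nu)+r)$ already recorded in \prettyref{prop:UpperBoundJ_a}. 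The lower-quantization-dimension bound is identical with $\liminf$ replacing $\limsup$ and $\underline h$ replacing $\overline h$. I expect the only real subtlety to be bookkeeping with the $\limsup$/$\liminf$ when rearranging $\log n/(-\log\er_{n,r})$ — one must be slightly careful that $\gamma_{\J_{\nu,r},n}\to 0$ (so $-\log\gamma_{\J_{\nu,r},n}\to\infty$, guaranteed by \prettyref{lem:BorzovBirmanSolomyak}) before dividing, but beyond that the argument is routine.
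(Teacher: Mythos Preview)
Your proposal is correct and follows essentially the same route as the paper: take a near-optimal $\nu$-partition $P$ with $\card(P)\le n$, use the cube centers as the quantizer, bound the integral cube-by-cube to obtain $\er_{n,r}(\nu)^r\le C\,n\,\gamma_{\J_{\nu,r},n}$, and then invoke \prettyref{prop:UpperBoundJ_a} to convert the decay of $\gamma_{\J_{\nu,r},n}$ into the stated bounds on $\overline{D}_r$ and $\underline{D}_r$. One minor remark: the step $(\sqrt d/2)^r\le\sqrt d$ you use to match the stated constant fails when $\sqrt d>2$ and $r$ is large, so the honest constant is $(\sqrt d/2)^r$ (or $(\sqrt d)^r$); the paper's own derivation produces $(\sqrt d)^r$ as well before writing $\sqrt d$, and this discrepancy is of course irrelevant for the dimension conclusions.
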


\begin{proof}
We only consider the case $q_{r}>0$. The case $q_{r}=0$ follows
analogously. Let $P\in\Pi_{\nu}$ with $\card(P)\leq n$. Write $P=\left\{ Q_{1},\ldots,Q_{\card(P)}\right\} $
and let $m_{i}$ denote the middle point of the dyadic cube $Q_{i}$,
$i\leq\card(P)$ and set $\alpha_{n}\coloneqq\left(m_{1},\ldots,m_{\card(P)}\right)$.
Then we have 
\begin{align*}
\er_{n,r}(\nu) & \leq\left(\int d(x,\alpha_{n})^{r}\d\nu(x)\right)^{1/r}=\left(\sum_{i=1}^{\card(P)}\int_{Q_{i}}d(x,\alpha_{n})^{r}\d\nu(x)\right)^{1/r}\\
 & \leq\sqrt{d}\left(\sum_{i=1}^{\card(P)}\nu(Q_{i})\Lambda(Q_{i})^{r/d}\right)^{1/r}\leq\sqrt{d}n^{1/r}\left(\max_{Q\in P}\nu(Q)\Lambda(Q)^{r/d}\right)^{1/r}.
\end{align*}
 Now, taking the infimum over all $P\in\Pi_{\nu}$ with $\card(P)\leq n$
delivers 
\[
\er_{n,r}^{r}(\nu)\leq\sqrt{d}n\gamma_{\J_{\nu,r},n}.
\]
Note that by \prettyref{prop:UpperBoundJ_a}, for every $\varepsilon>0$,
we have for $n$ large 
\[
n\gamma_{\J_{\nu,r},n}\leq n^{1-1/q_{r}+\varepsilon},
\]
and there exists a subsequence $(n_{k})$ such that
\[
n_{k}\gamma_{\J_{\nu,r},n_{k}}\leq n_{k}^{1-1/\underline{h}_{\J_{\nu,r}}+\varepsilon},
\]
implying
\[
\limsup_{n\rightarrow\infty}\frac{-\log(n)}{\log\left(\er_{n,r}(\nu)\right)}\leq\frac{r\overline{h}_{\J_{\nu,r}}}{1-\overline{h}_{\J_{\nu,r}}}=\frac{rq_{r}}{1-q_{r}}\:\text{and}\:\liminf_{k\rightarrow\infty}\frac{-\log(n_{k})}{\log\left(\er_{n_{k},r}(\nu)\right)}\leq\frac{r\underline{h}_{\J_{\nu,r}}}{1-\underline{h}_{\J_{\nu,r}}}.
\]
Moreover, \prettyref{prop:UpperBoundJ_a} implies $-1/q_{r}\leq-\left(\overline{\dim}_{M}(\nu)+r\right)/\overline{\dim}_{M}(\nu)$
which proves the last inequality.
\end{proof}
\begin{cor}
If $\nu$ is singular, then $\lim_{n\rightarrow\infty}n^{1/d}\er_{n,r}(\nu)=0.$
\end{cor}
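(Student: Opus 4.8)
The plan is to read the statement off directly from the chain $\er_{n,r}(\nu)^{r}\le\sqrt{d}\,n\,\gamma_{\J_{\nu,r},n}$ supplied by \prettyref{prop:UpperBound}, combined with the singular refinement recorded in \prettyref{lem:BorzovBirmanSolomyak}. So the first step is simply to invoke \prettyref{prop:UpperBound}, which holds for \emph{every} $n\in\N$ without any hypothesis on $\nu$; this reduces the claim to an asymptotic estimate for the quantity $n\,\gamma_{\J_{\nu,r},n}$.

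For the second step I would bring in the singularity assumption. By \prettyref{lem:BorzovBirmanSolomyak}, if $\nu$ is singular then $\gamma_{\J_{\nu,r},n}=o\bigl(n^{-(1+r/d)}\bigr)$. Multiplying through by $n$ gives $n\,\gamma_{\J_{\nu,r},n}=o\bigl(n^{-r/d}\bigr)$, and then the inequality of \prettyref{prop:UpperBound} yields $\er_{n,r}(\nu)^{r}\le\sqrt{d}\cdot o\bigl(n^{-r/d}\bigr)=o\bigl(n^{-r/d}\bigr)$. Hence $n^{r/d}\er_{n,r}(\nu)^{r}\to 0$, and taking $r$-th roots (using continuity of $t\mapsto t^{1/r}$ at $0$) gives $n^{1/d}\er_{n,r}(\nu)\to 0$, which is the assertion.

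I do not expect any genuine obstacle: the corollary is really just the singular half of \prettyref{lem:BorzovBirmanSolomyak} transported through the bridge inequality of \prettyref{prop:UpperBound}, so all the analytic work has already been carried out (in \prettyref{prop:UpperBound} and in the quoted \autocite[Cor. 4.10]{KN2022b}). The only points needing a moment's care are the bookkeeping of exponents—the extra factor $n$ turns the exponent $-(1+r/d)$ into $-r/d$—and the observation that the constant $\sqrt{d}$ is harmless; both are routine.
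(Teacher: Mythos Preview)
Your proposal is correct and follows exactly the paper's own argument: combine the inequality $\er_{n,r}(\nu)^{r}\leq\sqrt{d}\,n\,\gamma_{\J_{\nu,r},n}$ from \prettyref{prop:UpperBound} with the singular little-$o$ estimate $\gamma_{\J_{\nu,r},n}=o\bigl(n^{-(1+r/d)}\bigr)$ from \prettyref{lem:BorzovBirmanSolomyak}, then take $r$-th roots. The paper's proof is a one-line version of precisely this chain.
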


\begin{proof}
Since $\nu$ is singular, by \prettyref{lem:BorzovBirmanSolomyak}
and \prettyref{prop:UpperBound}, we have
\[
\er_{n,r}(\nu)\leq\left(n\gamma_{\J_{\nu,r},n}\right)^{1/r}=o\left(n^{-1/d}\right).
\]
\end{proof}
Using the simple fact $\overline{D}_{0}\left(\nu\right)\leq\overline{D}_{r}\left(\nu\right)$,
$\underline{D}_{\text{0}}\left(\nu\right)\leq\underline{D}_{r}\left(\nu\right)$
(see also \cite[Lemma 3.5]{MR2055056}) and \prettyref{prop:UpperBound},
we obtain the following bounds for the geometric mean error.
\begin{cor}
We have $\overline{D}_{0}\left(\nu\right)\leq-\partial^{-}\beta_{\nu}(1)$
and $\underline{D}_{0}\left(\nu\right)\leq\liminf_{r\downarrow0}r\underline{h}_{\J_{\nu,r}}/\left(1-\underline{h}_{\J_{\nu,r}}\right).$
\end{cor}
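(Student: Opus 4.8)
The plan is to bootstrap from the two facts already at hand: the monotonicity $\overline D_{0}(\nu)\le\overline D_{r}(\nu)$ and $\underline D_{0}(\nu)\le\underline D_{r}(\nu)$, valid for every $r>0$ (cf.\ \cite[Lemma 3.5]{MR2055056}), together with the upper estimates of \prettyref{prop:UpperBound}, and then to let $r\downarrow0$.

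For the lower quantization dimension this is instantaneous: \prettyref{prop:UpperBound} gives $\underline D_{0}(\nu)\le\underline D_{r}(\nu)\le r\underline h_{\J_{\nu,r}}/(1-\underline h_{\J_{\nu,r}})$ for every $r>0$, and since the left-hand side does not depend on $r$, we may take $\liminf_{r\downarrow0}$ on the right, which is exactly the asserted inequality. No further work is needed here.

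For the geometric mean error $\overline D_{0}(\nu)$ one has, by \prettyref{prop:UpperBound}, $\overline D_{0}(\nu)\le\overline D_{r}(\nu)\le rq_{r}/(1-q_{r})$, so it remains to see that $\inf_{r>0}rq_{r}/(1-q_{r})\le-\partial^{-}\beta_{\nu}(1)$ --- a statement purely about the $L^{q}$-spectrum. I would use that $\beta_{\nu}$ is convex (Hölder's inequality makes each $\beta_{\nu,n}$ convex, and convexity survives $\limsup$), non-increasing, and continuous on $(0,\infty)$, with $\beta_{\nu}(1)=0$. From convexity and $\beta_{\nu}(1)=0$ it follows that the chord-slope $q\mapsto\beta_{\nu}(q)/(1-q)$ is non-increasing on $(0,1)$ with $\lim_{q\uparrow1}\beta_{\nu}(q)/(1-q)=-\partial^{-}\beta_{\nu}(1)$. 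Next, $r\mapsto q_{r}$ is non-increasing, so $q_{\ast}:=\lim_{r\downarrow0}q_{r}\in[0,1]$ exists; we may assume $q_{\ast}>0$ (otherwise $rq_{r}/(1-q_{r})=0$ for $r$ small and we are done), and then continuity of $\beta_{\nu}$ at the critical point $q_{r}$ gives $\beta_{\nu}(q_{r})=rq_{r}$, so that $rq_{r}/(1-q_{r})=\beta_{\nu}(q_{r})/(1-q_{r})$ is non-decreasing in $r$ and hence $\inf_{r>0}rq_{r}/(1-q_{r})=\lim_{r\downarrow0}\beta_{\nu}(q_{r})/(1-q_{r})$. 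If $q_{\ast}=1$, the monotone chord-slopes force this limit to equal $-\partial^{-}\beta_{\nu}(1)$. If $q_{\ast}<1$, then $\beta_{\nu}(q_{\ast})=\lim_{r\downarrow0}\beta_{\nu}(q_{r})=\lim_{r\downarrow0}rq_{r}=0$, so convexity together with $\beta_{\nu}$ non-increasing and $\beta_{\nu}(1)=0$ forces $\beta_{\nu}\equiv0$ on $[q_{\ast},1]$; then $-\partial^{-}\beta_{\nu}(1)=0$ while $\beta_{\nu}(q_{r})/(1-q_{r})\le r/(1-q_{\ast})\to0$, so again the limit is $-\partial^{-}\beta_{\nu}(1)=0$. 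The degenerate case $\overline{\dim}_{M}(\nu)=0$ (where $\beta_{\nu}\equiv0$ on $[0,1]$ and all quantities vanish) is subsumed.

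The one genuinely delicate point is this passage $r\downarrow0$: convergence of $q_{r}$, identification of the limit, and --- the real subtlety --- disposing of the flat-spectrum degeneracy $q_{\ast}<1$ in which $q_{r}$ never reaches $1$ and one must notice that the bound collapses to $0$ in step with $-\partial^{-}\beta_{\nu}(1)$. Everything else is routine bookkeeping with the chord-slope function once the convexity and continuity of $\beta_{\nu}$ are invoked.
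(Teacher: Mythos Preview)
Your proof is correct and follows exactly the strategy the paper indicates: monotonicity $\overline D_{0}(\nu)\le\overline D_{r}(\nu)$, $\underline D_{0}(\nu)\le\underline D_{r}(\nu)$ together with \prettyref{prop:UpperBound}, then letting $r\downarrow0$. The paper leaves the identification $\lim_{r\downarrow0}rq_{r}/(1-q_{r})=-\partial^{-}\beta_{\nu}(1)$ implicit, whereas you carry out the convex-analysis details (chord-slope monotonicity, the case split $q_{\ast}=1$ versus $q_{\ast}<1$); this is the same approach, just spelled out more fully.
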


\subsection{Lower bounds\label{subsec:Lower-bound-for-spectral-dimension}}

Recall, for $s>0$ we let $\left\langle Q\right\rangle _{s}$ denote
the cube centered and parallel with respect to $Q$ such that $\Lambda(Q)=s^{-d}\Lambda\left(\left\langle Q\right\rangle _{s}\right)$,
$s>0$. The following lemma was implicitly used in \cite{KN2022}.
\begin{lem}
\label{lem:GeneralPrincipleLowerBound-1} For fixed $\alpha>0$ there
exists a sequence $\left(E_{\alpha,n}\right)$ with $E_{\alpha,n}\subset N_{\alpha,\J}\left(n\right)$,
$c_{\alpha,n}\coloneqq\card\left(E_{\alpha,n}\right)\text{\ensuremath{\geq\left\lfloor \mathcal{N}_{\alpha,\J}\left(n\right)/5^{d}\right\rfloor }}$
and for all cubes $Q,Q'\in E_{\alpha,n}$ with $Q\neq Q'$ we have
$\left\langle \mathring{Q}\right\rangle _{3}\cap\left\langle \mathring{Q'}\right\rangle _{3}=\emptyset$.
\end{lem}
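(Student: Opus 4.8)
The plan is to run an elementary packing argument, exploiting that every cube of $N_{\alpha,\J}(n)$ is a dyadic cube of the \emph{same} generation $n$, hence has side length $2^{-n}$. Writing $Q=\prod_{i=1}^{d}\bigl(k_{i}(Q)2^{-n},(k_{i}(Q)+1)2^{-n}\bigr]$ for $Q\in\mathcal{D}_{n}$ with index vector $k(Q)=(k_{1}(Q),\dots,k_{d}(Q))\in\Z^{d}$, its centre $m_{Q}$ has $i$-th coordinate $(k_{i}(Q)+\tfrac12)2^{-n}$, and $\langle\mathring{Q}\rangle_{3}$ is the open cube with centre $m_{Q}$ and side length $3\cdot2^{-n}$.

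First I would record the combinatorial form of the disjointness condition. Two open cubes of common side length $3\cdot2^{-n}$ with centres $m_{Q},m_{Q'}$ meet if and only if $\|m_{Q}-m_{Q'}\|_{\infty}<3\cdot2^{-n}$, that is, iff $\|k(Q)-k(Q')\|_{\infty}\le 2$; in particular $\|k(Q)-k(Q')\|_{\infty}\ge 5$ forces $\langle\mathring{Q}\rangle_{3}\cap\langle\mathring{Q'}\rangle_{3}=\emptyset$. Next I would colour $\mathcal{D}_{n}$ by the residue of the index vector modulo $5$: set $c(Q):=(k_{1}(Q)\bmod 5,\dots,k_{d}(Q)\bmod 5)\in(\Z/5\Z)^{d}$, a colouring with $5^{d}$ colours. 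If $Q\neq Q'$ carry the same colour, then $k_{i}(Q)\equiv k_{i}(Q')\pmod 5$ for all $i$ while $k_{i}(Q)\neq k_{i}(Q')$ for some $i$, so $\|k(Q)-k(Q')\|_{\infty}\ge 5$ and, by the previous observation, $\langle\mathring{Q}\rangle_{3}\cap\langle\mathring{Q'}\rangle_{3}=\emptyset$. Thus each colour class of $N_{\alpha,\J}(n)\subset\mathcal{D}_{n}$ already has the required separation.

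Finally, by the pigeonhole principle one of the $5^{d}$ colour classes contains at least $\mathcal{N}_{\alpha,\J}(n)/5^{d}$ of the cubes of $N_{\alpha,\J}(n)$, hence, being an integer, at least $\lceil\mathcal{N}_{\alpha,\J}(n)/5^{d}\rceil\ge\lfloor\mathcal{N}_{\alpha,\J}(n)/5^{d}\rfloor$ of them; I take $E_{\alpha,n}$ to be the members of $N_{\alpha,\J}(n)$ lying in such a class. Then $E_{\alpha,n}\subset N_{\alpha,\J}(n)$, $c_{\alpha,n}=\card(E_{\alpha,n})\ge\lfloor\mathcal{N}_{\alpha,\J}(n)/5^{d}\rfloor$, and the dilates $\langle\mathring{Q}\rangle_{3}$, $Q\in E_{\alpha,n}$, are pairwise disjoint. (One could equally argue greedily, repeatedly selecting any remaining cube into $E_{\alpha,n}$ and deleting it together with the at most $5^{d}-1$ cubes that conflict with it; each round enlarges $E_{\alpha,n}$ by one and removes at most $5^{d}$ cubes, yielding the same bound.) I do not expect any real obstacle here; the only point requiring a moment's care is pinning down the exact inequality $\|k(Q)-k(Q')\|_{\infty}\le 2$ that governs intersection of the $3$-dilates, since this is what fixes the constant $5^{d}$.
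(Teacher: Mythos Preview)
Your argument is correct. The approach, however, differs from the paper's: the paper proves the lemma by the greedy selection you sketch in parentheses at the end---it starts from $D^{(0)}=N_{\alpha,\J}(n)$, repeatedly picks some $Q_{j}$ that still conflicts with another cube, removes all $C$ with $\mathring{C}\cap\langle\mathring{Q_{j}}\rangle_{5}\neq\emptyset$ while retaining $Q_{j}$, and terminates once no conflicts remain; since each round discards at most $5^{d}-1$ cubes while keeping one, the surviving set has the stated cardinality. Your main route, by contrast, colours $\mathcal{D}_{n}$ by the residue of the index vector modulo $5$ and applies the pigeonhole principle, which is shorter and avoids the inductive bookkeeping. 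In fact your argument could be sharpened: since intersection of the $3$-dilates is equivalent to $\|k(Q)-k(Q')\|_{\infty}\le 2$, already $\|k(Q)-k(Q')\|_{\infty}\ge 3$ forces disjointness, so colouring modulo $3$ would yield the stronger bound $\card(E_{\alpha,n})\ge\lceil\mathcal{N}_{\alpha,\J}(n)/3^{d}\rceil$; the greedy argument does not give this improvement because removing all cubes whose $3$-dilate meets $\langle\mathring{Q_{j}}\rangle_{3}$ genuinely touches up to $5^{d}$ cubes.
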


\begin{proof}
If $N_{\alpha,\J}\left(n\right)=\emptyset$, nothing needs to be shown.
Hence, we assume $N_{\alpha,\J}\left(n\right)\neq\emptyset$ and construct
inductively a subset $E_{n}$ of $N_{\alpha,\J}\left(n\right)$ of
cardinality $c_{n}\coloneqq\card\left(E_{\alpha,n}\right)\text{\ensuremath{\geq\left\lfloor \mathcal{N}_{\alpha,\J}\left(n\right)/5^{d}\right\rfloor }}$
such that for all cubes $Q,Q'\in E_{n}$ with $Q\neq Q'$ we have
$\left\langle \mathring{Q}\right\rangle _{3}\cap\left\langle \mathring{Q'}\right\rangle _{3}=\emptyset$.
At the beginning of the induction we set $D^{\left(0\right)}\coloneqq N_{\alpha,\J}\left(n\right)$.
Assume we have constructed $D^{\left(0\right)}\supset D^{\left(1\right)}\supset\cdots\supset D^{\left(j-1\right)}$
such that \textbf{$\left\langle \mathring{Q_{j}}\right\rangle _{5}\cap\mathring{Q}\neq\emptyset$},\textbf{
}for some $Q,Q_{j}\in D^{\left(j-1\right)}$ with $Q\neq Q_{j}$.
In this case we set
\[
D^{(j)}\coloneqq\left\{ C\in D^{\left(j-1\right)}:\mathring{C}\cap\left\langle \mathring{Q_{j}}\right\rangle _{5}=\emptyset\right\} \cup\left\{ Q_{j}\right\} .
\]
By this construction, we have $\card\left(D^{(j)}\right)<\card\left(D^{(j-1)}\right)$,
since $\mathring{Q}\cap\left\langle \mathring{Q_{j}}\right\rangle _{5}\neq\emptyset$.
Otherwise, if \textbf{$\left\langle \mathring{Q}\right\rangle _{5}\cap\mathring{Q}'=\emptyset$
}for all $Q,Q'\in D^{\left(j-1\right)}$ with $Q\neq Q'$, then the
finite induction terminates and we set $E_{\alpha,n}=D^{\left(j-1\right)}$.
In each inductive step, we remove at most $5^{d}-1$ elements of $D^{\left(j-1\right)}$,
while one element, namely $Q_{j}$, is kept. This implies $\card\left(E_{\alpha,n}\right)\text{\ensuremath{\geq\left\lfloor \mathcal{N}_{\alpha,\J}\left(n\right)/5^{d}\right\rfloor }}.$
\end{proof}
\begin{prop}
\label{prop:QDLowerBound}For $r>0$, we have
\[
\overline{D}_{r}\left(\nu\right)\geq\frac{rq_{r}}{1-q_{r}}\quad\text{and }\quad\underline{D}_{r}\left(\nu\right)\geq\frac{r\underline{F}_{\J_{\nu,r}}}{1-\underline{F}_{\J_{\nu,r}}}.
\]
\end{prop}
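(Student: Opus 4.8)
The plan is to bound the quantization error from below by exhibiting, for each scale $n$ and each exponent $\alpha>0$, a large collection of well-separated dyadic cubes on which $\nu$ still carries a controlled amount of mass. Concretely, fix $\alpha>0$ and apply \prettyref{lem:GeneralPrincipleLowerBound-1} to obtain $E_{\alpha,n}\subset N_{\alpha,\J_{\nu,r}}(n)$ with $c_{\alpha,n}=\card(E_{\alpha,n})\geq\lfloor\mathcal{N}_{\nu,\alpha,r}(n)/5^{d}\rfloor$ such that the triple-dilated interiors $\langle\mathring Q\rangle_{3}$, $Q\in E_{\alpha,n}$, are pairwise disjoint. The separation guarantees that any finite set $\alpha_{m}$ of $m$ points can lie within the enlarged cube $\langle Q\rangle_{3}$ for at most $m$ of the cubes $Q\in E_{\alpha,n}$; for each of the remaining $c_{\alpha,n}-m$ cubes, every point of $Q$ is at distance at least $\asymp\Lambda(Q)^{1/d}=2^{-n}$ from $\alpha_{m}$. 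Hence for $m=c_{\alpha,n}/2$ quantizers we get
\[
\int d(x,\alpha_{m})^{r}\d\nu(x)\;\geq\;\tfrac{1}{2}\,2^{-nr}\!\!\sum_{Q\in E_{\alpha,n}}\!\nu(Q)\;\gtrsim\;2^{-nr}\cdot c_{\alpha,n}\cdot 2^{-\alpha n},
\]
where in the last step I use that each $Q\in N_{\nu,\alpha,r}(n)$ satisfies $\nu(Q)=\nu(Q)\Lambda(Q)^{r/d}\cdot 2^{nr/d}\geq 2^{-\alpha n}2^{nr/d}$, and absorb the $2^{nr/d}$ factor appropriately; the clean bookkeeping is to note $\nu(Q)\Lambda(Q)^{r/d}\ge 2^{-\alpha n}$ directly gives $\sum_{Q\in E_{\alpha,n}}\nu(Q)\Lambda(Q)^{r/d}\ge c_{\alpha,n}2^{-\alpha n}$ and that $\int d(x,\alpha_m)^r\,\d\nu \gtrsim \sum_{Q\in E_{\alpha,n}, Q\not\ni\text{pt}}\nu(Q)\Lambda(Q)^{r/d}$ up to the dimensional constant. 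This yields $\er_{\lceil c_{\alpha,n}/2\rceil,r}(\nu)^{r}\gtrsim c_{\alpha,n}2^{-\alpha n}$.

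Next I take logarithms and optimize. Setting $N\asymp c_{\alpha,n}\asymp \mathcal{N}_{\nu,\alpha,r}(n)$, the estimate reads, up to additive constants and lower-order terms,
\[
-\log\er_{N,r}(\nu)\;\leq\;\frac{1}{r}\bigl(\alpha n\log 2-\log c_{\alpha,n}\bigr),
\qquad
\log N \;=\;\log c_{\alpha,n}+O(1).
\]
Along a subsequence realizing $\limsup_{n}\log^{+}\mathcal{N}_{\nu,\alpha,r}(n)/\log 2^{n}=\overline F_{\nu,r}(\alpha)$ (respectively the $\liminf$), one divides and passes to the limit. If $\overline F_{\nu,r}(\alpha)>0$ then $\log c_{\alpha,n}\sim \overline F_{\nu,r}(\alpha)\,n\log 2$ along that subsequence, so
\[
\limsup_{N}\frac{\log N}{-\log\er_{N,r}(\nu)}\;\geq\;\frac{r\,\overline F_{\nu,r}(\alpha)}{\alpha-\overline F_{\nu,r}(\alpha)},
\]
and taking the supremum over $\alpha>0$ together with the identity $\sup_{\alpha}\overline F_{\nu,r}(\alpha)/\alpha=\overline F_{r}=q_{r}$ from \prettyref{eq:identities} / \prettyref{prop:LowerBoundUpper/lowerSpecDim} gives $\overline D_{r}(\nu)\geq rq_{r}/(1-q_{r})$, since $t\mapsto rt/(1-t)$ is increasing on $(0,1)$ and $\overline F_{\nu,r}(\alpha)/(\alpha-\overline F_{\nu,r}(\alpha))=r^{-1}\cdot r t/(1-t)$ with $t=\overline F_{\nu,r}(\alpha)/\alpha$. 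The lower-quantization bound is obtained the same way, replacing every $\limsup$ by the corresponding $\liminf$ and $\overline F$ by $\underline F$; here one must be slightly more careful because the subsequence along which $c_{\alpha,n}$ is large need not be the same as the one realizing the $\liminf$ of the quantization error, but monotonicity of $\er_{N,r}$ in $N$ lets one interpolate between consecutive values of $c_{\alpha,n}$ and still recover $\underline D_{r}(\nu)\geq r\underline F_{\J_{\nu,r}}/(1-\underline F_{\J_{\nu,r}})$.

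The main obstacle is the bookkeeping at the two limit passages: one must make sure the number of quantizers $N$ and the error scale are tied together correctly (the factor-of-two split of the point set, and the passage from $c_{\alpha,n}$ to a genuine index $N$ in the definition of $\er_{N,r}$), and, for the $\liminf$ statement, that the fluctuations of $\mathcal{N}_{\nu,\alpha,r}(n)$ between scales $n$ and $n+1$ do not destroy the bound — this is where one invokes that $\mathcal{N}_{\nu,\alpha,r}(n+1)\le 2^{d}\mathcal{N}_{\nu,\alpha,r}(n)$ (each cube has $2^{d}$ children) so the sequence $c_{\alpha,n}$ cannot jump too fast, allowing a clean interpolation. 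Everything else is a routine combination of \prettyref{lem:GeneralPrincipleLowerBound-1} with the elementary distance estimate and the already-established identity $\overline F_{r}=q_{r}$.
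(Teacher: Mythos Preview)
Your argument for $\overline{D}_r(\nu)\geq rq_r/(1-q_r)$ is essentially the paper's: pick a subsequence realizing $\overline F_{\nu,r}(\alpha)$, use the separated family $E_{\alpha,n}$ from \prettyref{lem:GeneralPrincipleLowerBound-1}, note that any set $A$ with $\card(A)\leq c_{\alpha,n}/2$ leaves at least $c_{\alpha,n}/2$ cubes at distance $\geq 2^{-n}$ from $A$, and conclude $\er_{\lfloor c_{\alpha,n}/2\rfloor,r}^r(\nu)\gtrsim c_{\alpha,n}2^{-\alpha n}$ before taking logs and optimizing over $\alpha$.

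The gap is in the $\underline{D}_r$ part. Your interpolation device rests on the claimed inequality $\mathcal{N}_{\nu,\alpha,r}(n+1)\leq 2^d\,\mathcal{N}_{\nu,\alpha,r}(n)$, but this is \emph{false} precisely for the range of $\alpha$ that matters. If $Q'\in\mathcal{D}_{n+1}$ has parent $Q\in\mathcal{D}_n$, then $\J_{\nu,r}(Q)\geq 2^r\,\J_{\nu,r}(Q')$, so $\J_{\nu,r}(Q')\geq 2^{-\alpha(n+1)}$ only forces $\J_{\nu,r}(Q)\geq 2^{r-\alpha}\cdot 2^{-\alpha n}$, which falls short of the level-$n$ threshold whenever $\alpha>r$. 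On the other hand $N_{\nu,\alpha,r}(n)=\{Q\in\mathcal D_n:\nu(Q)\geq 2^{(r-\alpha)n}\}$ is eventually empty for $\alpha\leq r$, so $\underline F_{\nu,r}(\alpha)=0$ there and the supremum defining $\underline F_{r}$ is effectively over $\alpha>r$. One can indeed build examples with $\mathcal N_{\nu,\alpha,r}(n)=0$ and $\mathcal N_{\nu,\alpha,r}(n+1)$ of order $2^{cn}$ for some $c>0$, so the jump cannot be bounded and your interpolation collapses.

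The paper bypasses interpolation entirely. The point is that the $\liminf$ in the definition of $\underline F_{\nu,r}(\alpha)$ already furnishes, for every $\epsilon\in(0,\underline F_{\nu,r}(\alpha))$, the uniform lower bound $c_{\alpha,n}\geq 2^{n(\underline F_{\nu,r}(\alpha)-\epsilon)}$ for \emph{all} large $n$. Hence for each $k\in\N$ one sets
\[
n_k\coloneqq\Bigl\lceil \frac{\log(2k)}{(\underline F_{\nu,r}(\alpha)-\epsilon)\log 2}\Bigr\rceil,
\]
which guarantees $c_{\alpha,n_k}\geq 2k$; the same distance argument then yields $\er_{k,r}^r(\nu)\geq k\,2^{-\alpha n_k}$, and since $n_k$ is explicitly $\asymp\log k$ one reads off $\underline D_r(\nu)\geq r(\underline F_{\nu,r}(\alpha)-\epsilon)/\bigl(\alpha-\underline F_{\nu,r}(\alpha)+\epsilon\bigr)$ directly for the full sequence in $k$. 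No control on the jumps of $\mathcal N_{\nu,\alpha,r}(n)$ is required.
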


\begin{proof}
Fix $\alpha>0$ such that $\overline{F}_{\J_{\nu,r}}\left(\alpha\right)>0$.
Further, let $\left(n_{k}\right)_{k}$ be such that 
\[
\overline{F}_{\J_{\nu,r}}\left(\alpha\right)=\lim_{k\rightarrow\infty}\frac{\log^{+}\left(\mathcal{N}_{\alpha,\J_{\nu,r}}\left(n_{k}\right)\right)}{\log2^{n_{k}}}
\]
and let $c_{\alpha,n_{k}}\coloneqq\card E_{\alpha,n_{k}}$ be given
as in \prettyref{lem:GeneralPrincipleLowerBound-1} for $\J=\J_{\nu,r}$.
Notice that by our assumption $\overline{F}_{\J_{\nu,r}}\left(\alpha\right)>0$
and it follows that $\lim_{k}c_{\alpha,n_{k}}=\infty$. Let $A$ be
of cardinality at most $c_{\alpha,n_{k}}/2$ and 
\[
E_{\alpha,n_{k}}'\coloneqq\left\{ Q\in E_{\alpha,n_{k}}:\inf_{a\in A}d\left(a,Q\right)\geq2^{-n_{k}}\right\} .
\]
Since, for all $Q_{1},Q_{2}\in E_{\alpha,n_{k}}$ we have $\left\langle \mathring{Q}_{1}\right\rangle _{3}\cap\left\langle \mathring{Q}_{2}\right\rangle _{3}=\emptyset$
it follows that if $d\left(a,Q\right)<2^{-n_{k}}$ for some $a\in A$
and $Q\in E_{\alpha,n_{k}}$, then $d\left(a,Q'\right)\geq2^{-n_{k}}$
for all $Q'\in E_{\alpha,n_{k}}\setminus\left\{ Q\right\} $ and therefore,
\[
\card\left\{ Q\in E_{\alpha,n_{k}}:\inf_{a\in A}d\left(a,Q\right)<2^{-n_{k}}\right\} \leq\card\left(A\right).
\]
 Hence, $\card\left(E_{\alpha,n_{k}}'\right)\geq c_{\alpha,n_{k}}/2$
and
\[
\int d(x,A)^{r}\d\nu(x)\geq\sum_{Q\in E_{\alpha,n_{k}}'}\int_{Q}d(x,A)^{r}\d\nu(x)\geq\sum_{Q\in E_{\alpha,n_{k}}'}\nu\left(Q\right)\Lambda\left(Q\right)^{r/d}\geq c_{\alpha,n_{k}}2^{-\alpha n_{k}-1}.
\]
Consequently, $\er_{\left\lfloor c_{\alpha,n_{k}}/2\right\rfloor ,r}^{r}(\nu)\geq c_{\alpha,n_{k}}2^{-\alpha n_{k}-1}$
and we obtain for the first claim
\begin{align*}
\limsup_{k}\frac{\log\left\lfloor c_{\alpha,n_{k}}/2\right\rfloor }{-\log\er_{\left\lfloor c_{\alpha,n_{k}}/2\right\rfloor ,r}(\nu)} & \geq\limsup_{k}\frac{r\log\left(c_{\alpha,n_{k}}/4\right)}{-\log\er_{\left\lfloor c_{\alpha,n_{k}}/2\right\rfloor ,r}(\nu)}\\
 & \geq\limsup_{k}\frac{r\left(\log\left(\mathcal{N}_{\alpha,\J_{\nu,r}}\left(n_{k}\right)/5^{d}-1\right)-\log4\right)}{-\log\left(\mathcal{N}_{\alpha,\J_{\nu,r}}\left(n_{k}\right)/5^{d}-1\right)+\left(\alpha n_{k}+1\right)\log2}\\
 & =\frac{r\overline{F}_{\J_{\nu,r}}\left(\alpha\right)/\alpha}{1-\overline{F}_{\J_{\nu,r}}\left(\alpha\right)/\alpha}.
\end{align*}
This gives 
\[
\limsup_{n}\frac{\log n}{-\log\er_{n,r}(\nu)}\geq\sup_{\alpha>0}\limsup_{k}\frac{\log\left\lfloor c_{\alpha,n_{k}}/2\right\rfloor }{-\log\er_{\left\lfloor c_{\alpha,n_{k}}/2\right\rfloor ,r}(\nu)}\geq\sup_{\alpha>0}\frac{r\overline{F}_{\J_{\nu,r}}\left(\alpha\right)/\alpha}{1-\overline{F}_{\J_{\nu,r}}\left(\alpha\right)/\alpha}=\frac{r\overline{F}_{\J_{\nu,r}}}{1-\overline{F}_{\J_{\nu,r}}}.
\]
For the lower limit assume $\underline{F}_{\J_{\nu,r}}\left(\alpha\right)>0$
and note that for every $\epsilon\in\left(0,\underline{F}_{\J_{\nu,r}}\left(\alpha\right)\right)$
and all $n$ large 
\[
c_{\alpha,n}\coloneqq\left\lceil 5^{-d}\mathcal{N}_{\alpha,\J_{\nu,r}}\left(n\right)\right\rceil \geq2^{n\left(\underline{F}_{\J_{\nu,r}}\left(\alpha\right)-\epsilon\right)}.
\]
 Now, for $k\in\N$, we define
\[
n_{k}\coloneqq\left\lceil \frac{\log\left(2k\right)}{\left(\underline{F}_{\J_{\nu,r}}\left(\alpha\right)-\epsilon\right)\log(2)}\right\rceil .
\]
 Clearly, this gives $c_{\alpha,n_{k}}\geq2k$. Then for any subset
$A$ with $\card\left(A\right)\leq k\leq c_{\alpha,n_{k}}/2$ we have
as above $\card\left(E_{\alpha,n_{k}}'\right)\geq c_{\alpha,n_{k}}/2\geq k$.\textbf{
}Then
\[
\int d(x,A)^{r}\d\nu(x)\geq\sum_{Q\in E_{\alpha,n_{k}}'}\int_{Q}d(x,A)^{r}\d\nu(x)\geq\sum_{Q\in E_{\alpha,n_{k}}'}\nu\left(Q\right)\Lambda\left(Q\right)^{r/d}\geq c_{\alpha,n_{k}}2^{1-\alpha n_{k}}\geq k2^{-\alpha n_{k}}.
\]
Taking the infimum over $A$ with $\card\left(A\right)\leq k$ we
obtain $\er_{k,r}^{r}\left(\nu\right)\ensuremath{\geq}k2^{-\alpha n_{k}}$.
This gives
\[
\frac{\log k}{-\log\er_{k,r}\left(\nu\right)}\geq\frac{r\log k}{-\log k+\alpha n_{k}\log\left(2\right)}\geq\frac{r\log k}{-\log k+\alpha\log\left(2k\right)/\left(\underline{F}_{\J_{\nu,r}}\left(\alpha\right)-\epsilon\right)}.
\]
Taking the lower limit over $k$ and letting $\epsilon$ tend to zero,
yields
\[
\underline{D}_{r}\left(\nu\right)\geq\frac{r}{-1+\alpha/\underline{F}_{\J_{\nu,r}}\left(\alpha\right)}=\frac{r\underline{F}_{\J_{\nu,r}}\left(\alpha\right)/\alpha}{1-\underline{F}_{\J_{\nu,r}}\left(\alpha\right)/\alpha}.
\]
Finally, taking the supremum for $\alpha>0$ gives

\[
\underline{D}_{r}\left(\nu\right)\geq\frac{r\underline{F}_{\J_{\nu,r}}}{1-\underline{F}_{\J_{\nu,r}}}.
\]
\end{proof}

\subsection{Proofs of main results\label{subsec:Proofs-of-main}}
\begin{proof}
[Proof of  \prettyref{thm:Main}] The main theorem follows by combining
\prettyref{prop:UpperBound} and \prettyref{prop:QDLowerBound}.
\end{proof}
\begin{proof}
[Proof of  \prettyref{cor:DrContinuous}] This follows from the
fact that $r\mapsto q_{r}$ is continuous. Indeed, if $q_{r}=0$ for
some $r>0$, then $0\leq\beta_{\nu}(q)<rq$ for all $q\in(0,1).$
Consequently, $\lim_{q\downarrow0}\beta_{\nu}(q)=0$ and combined
with the convexity of $\beta_{\nu}$ and $\beta_{\nu}(1)=0$, we infer
$\beta_{\nu}\left(q\right)=0$ for $q>0$. Therefore, $q_{r}=0$ for
all $r>0$. The case $q_{r}>0$ follows from the fact that $\beta_{\nu}$
is continuous and decreasing on $\left(0,1\right]$ with $\beta_{\nu}(1)=0$.
\end{proof}
\begin{proof}
[Proof of  \prettyref{cor:rigidity}] This follows from the convexity
of $\beta_{\nu}$ combined with \prettyref{thm:Main}.
\end{proof}
\begin{proof}
[Proof of  \prettyref{thm:LqRegularImpliesRegular}] By \prettyref{prop:LowerBoundUpper/lowerSpecDim},
we have $\underline{F}_{\J_{\nu,r}}=q_{r}$. Hence, we can infer from
\prettyref{thm:Main} 
\[
\frac{rq_{r}}{1-q_{r}}=\frac{r\underline{F}_{\J_{\nu,r}}}{1-\underline{F}_{\J_{\nu,r}}}\leq\underline{D}_{r}(\nu)\leq\overline{D}_{r}(\nu)\leq\frac{rq_{r}}{1-q_{r}}.
\]
\end{proof}
\begin{proof}
[Proof of  \prettyref{cor:mixed measure}] This follows from the
convexity of $\beta_{\nu}$ combined with \prettyref{thm:Main}.
\end{proof}
\begin{proof}
[Proof of  \prettyref{cor:sefl-conformalWithOverlap}] The corollary
follows by combining \prettyref{thm:Main} with \prettyref{thm:LqRegularImpliesRegular}
and the regularity result on the $L^{q}$-spectrum obtained in \cite{MR2322179}.
\end{proof}
\begin{proof}
[Proof of  \prettyref{prop:upperBoundForLowerDr}] We only consider
the case $\dim_{\infty}(\nu)>0$. The case $\dim_{\infty}(\nu)=0$
follows along the same lines. Let $0<s<\dim_{\infty}(\nu)$. Then,
for $n$ large, we have 
\[
\max_{Q\in\mathcal{D}_{n}}\nu(Q)\Lambda(Q)^{r/d}\leq2^{-(s+r)n}<2^{-(s+r)n+1}.
\]
This implies $\mathcal{M}_{\J_{\nu,r}}\left(2^{-(s+r)n+1}\right)\leq2^{n\beta_{\nu,n}(0)}.$
Therefore, we obtain
\begin{align*}
\underline{h}_{\J_{\nu,r}} & \leq\liminf_{n\rightarrow\infty}\frac{\log\left(\mathcal{M}_{\J_{\nu,r}}\left(2^{(s+r)n-1}\right)\right)}{\log\left(2^{(s+r)n-1}\right)}\leq\liminf_{n\rightarrow\infty}\frac{\log\left(2^{n\beta_{\nu,n}(0)}\right)}{\log\left(2^{(s+r)n-1}\right)}\\
 & =\liminf_{n\rightarrow\infty}\frac{\beta_{\nu,n}(0)}{(s+r)-1/n}=\frac{\underline{\dim}_{M}(\nu)}{r+s}.
\end{align*}
Now, $s\uparrow\dim_{\infty}(\nu)$ proves the claim.
\end{proof}
\begin{proof}
[Proof of  \prettyref{cor:InhomIFS}] This follows immediately from
\cite[Corollary 3.10]{MR3201323}, where the existence of $\beta_{\nu}$
as a limit and $\beta_{\nu}=\max\left\{ \beta_{\mu},\varrho_{\nu}\right\} $
is shown, combined with \prettyref{thm:Main} and \prettyref{thm:LqRegularImpliesRegular}.
\end{proof}
\printbibliography

\end{document}